\newtheorem{theorem}{Theorem}[section]
\newtheorem{proposition}[theorem]{Proposition}
\newtheorem{lemma}[theorem]{Lemma}
\newtheorem{corollary}[theorem]{Corollary}
\newtheorem{conjecture}[theorem]{Conjecture}
\theoremstyle{definition}
\newtheorem{definition}[theorem]{Definition}
\theoremstyle{remark}
\newtheorem{remark}[theorem]{Remark}
\DeclareMathOperator\KL{D}
\DeclareMathOperator\Cov{Cov}
\DeclareMathOperator\Var{Var}
\DeclareMathOperator\Multi{Multi}
\newcommand\ind[1]{\mathbf{1}_{#1}}
\newcommand\R{\mathbf{R}}
\newcommand\intd[1]{\, \mathrm{d}#1}
\newcommand\empkl[3]{V_{#1,#2,#3}}
\newcommand{\centered}[2][\E]{#2-#1{#2}}
\definecolor{orcidlogocol}{HTML}{A6CE39}
\newcommand{\orcidlogo}{\BeginAccSupp{method=escape,Alt={ORCiD ID}}\begin{tikzpicture}[yscale=-1,transform shape]
\fill[orcidlogocol] svg{M256,128c0,70.7-57.3,128-128,128C57.3,256,0,198.7,0,128C0,57.3,57.3,0,128,0C198.7,0,256,57.3,256,128z};
    \fill[white] svg{M86.3,186.2H70.9V79.1h15.4v48.4V186.2z}
                 svg{M108.9,79.1h41.6c39.6,0,57,28.3,57,53.6c0,27.5-21.5,53.6-56.8,53.6h-41.8V79.1z M124.3,172.4h24.5c34.9,0,42.9-26.5,42.9-39.7c0-21.5-13.7-39.7-43.7-39.7h-23.7V172.4z}
                 svg{M88.7,56.8c0,5.5-4.5,10.1-10.1,10.1c-5.6,0-10.1-4.6-10.1-10.1c0-5.6,4.5-10.1,10.1-10.1C84.2,46.7,88.7,51.3,88.7,56.8z};
\end{tikzpicture}\EndAccSupp{}}
\newcommand\orcidlogosized[1]{\raisebox{-#1/5}{\resizebox{!}{#1}{\orcidlogo}}}
\newcommand\orcid[2][1em]{\mbox{\href{https://orcid.org/#2}{\orcidlogosized{#1}\hspace{\dimexpr #1/2}\nolinkurl{#2}}}}
\title{Finite-sample concentration of the empirical relative entropy
around its mean}
\author{Rohit Agrawal\\\orcid{0000-0001-5563-7402}}
\begin{document}
\maketitle
\begin{abstract}
  In this note, we show that the relative entropy of an empirical distribution
of $n$ samples drawn from a set of size $k$ with respect to the true underlying
distribution is exponentially concentrated around its expectation, with central
moment generating function bounded by that of a gamma distribution with shape
$2k$ and rate $n/2$. This improves on recent work of
\textcite{bha_pen_sharp_2021} on the same problem, who showed such a similar
bound with an additional polylogarithmic factor of $k$ in the shape, and also
confirms a recent conjecture of \textcite{mar_jia_tan_now_wei_concentration_2020}.
The proof proceeds by reducing the case $k>3$ of the multinomial distribution to
the simpler case $k=2$ of the binomial, for which the desired bound follows from
standard results on the concentration of the binomial.
\end{abstract}

\section{Introduction}

Given $n$ samples from some distribution $P = (p_{1}, \dots, p_{k})$ on a finite
set of size $k$, the realized fraction of samples corresponding to each element
$\of{\frac{X_{1}}{n}, \dots, \frac{X_{k}}{n}}$ is a natural estimator (and
in fact the maximum likelihood estimator) of the underlying distribution $P$.
Since the Neyman--Pearson lemma \cite{ney_pea_problem_1933}
reduces optimal hypothesis testing to understanding the distribution of the
likelihood ratio statistic, in this case we are led to study the
empirical relative entropy with respect to the true
distribution:
\begin{definition}
  Given a distribution $P=(p_{1}, \dots, p_{k})$ on a finite set of size $k$ and
  multinomially distributed random variables $(X_{1}, \dots, X_{k}) \sim \Multi\of{n; (p_{1}, \dots, p_{k})}$
  for a positive integer $n$, the \emph{empirical relative entropy} is
  \[
    \empkl nkP
    = \KL\diver{\of{\frac{X_{1}}{n},\dots,\frac{X_{k}}{n}}}{\of{p_{1},\dots,p_{k}}}
    =
    \sum_{i=1}^{k} \frac{X_{i}}{n}\log\frac{X_{i}}{np_{i}}
    \,,
  \]
  and is such that $2n\empkl nkP$ is the likelihood-ratio statistic of the hypothesis
  that the probabilities of $X$ are $P$, where
  \[
    \KL\diver[\big]{\of{q_1,\dotsc,q_k}}{\of{p_1,\dotsc,p_k}}
    = \sum_{i=1}^{k} q_{i}\log\frac{q_{i}}{p_{i}}
  \]
  denotes the \emph{relative entropy} or \emph{Kullback--Leibler (KL) divergence}
  of $Q$ with respect to $P$.\footnote{All logarithms and exponentials
  are in the natural base.}
\end{definition}

For fixed $k$, it is well known (from e.g.~Wilks' theorem \cite{wilks_largesample_1938})
that the likelihood ratio statistic $2n\empkl nkP$ converges in distribution to
$\chi^{2}_{k-1}$ a chi-squared distribution with $k-1$ degrees of freedom as
$n$ goes to infinity (assuming $P$ is not supported on a set of size smaller
than $k$). For the specific case we are interested in of the multinomial
distribution, there are also known finite-sample bounds, most notably the
now-standard bound obtained via the method of types \cite{csiszar_method_1998}
that
\[\PR{\empkl nkP\geq \eps}\leq \binom{n+k-1}{k-1}\cdot\exp\of{-n\eps}\]
for all real $\eps\geq0$, which has optimal decay in $\eps$ as $n$ and $\eps$ go to infinity,
but is trivial for $\eps$ close to $\E{\empkl nkP}\leq \log\of{1 + \frac{k-1}{n}}\leq\frac{k-1}{n}$
\cite{paninski_estimation_2003}. \textcite{mar_jia_tan_now_wei_concentration_2020}
recently substantially improved this bound, giving a roughly quadratic
improvement in the $\binom{n+k-1}{k-1}$ factor while maintaining the decay in $\eps$,
and also posed several conjectures about improved bounds. Subsequently,
the author gave an incomparable exponential bound \cite{agrawal_finitesample_2020} (further
improved by \textcite{guo_ric_chernofftype_2021}) which becomes non-trivial for
$\eps > \frac{k-1}{n}$ but which has decay like $n\eps\cdot(1-o(1))$ for large $\eps$,
by bounding the moment generating function of $\empkl nkP$.

Most of the above bounds focused on the question of bounding the probability that
$\empkl nkP$ exceeds $0$ by some $\eps$, but it is also natural to ask about
concentration around $\E{\empkl nkP}$. In particular, \textcite{mar_jia_tan_now_wei_concentration_2020}
posed the following conjecture:
\begin{conjecture}[{\cite[Conjecture 2]{mar_jia_tan_now_wei_concentration_2020}}]
  \label{conj:tb}
  There are positive constants $c_1$ and $c_2$ such that for every $n$, $k$,
  $P$, and $\eps\geq 0$ it holds that
  \[
    \PR{\abs{\centered{\empkl nkP}} \geq \eps}
    \leq
    c_1\exp\of{-c_2 \min\set{\frac{n^2\eps^2}{k-1},n\eps}}
    \,.
  \]
\end{conjecture}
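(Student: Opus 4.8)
The plan is to derive \cref{conj:tb} from the sharper statement that the \emph{central} moment generating function of $\empkl nkP$ is dominated by that of a gamma random variable $G\sim\Gamma(2k,n/2)$: for all $\lambda<n/2$ and all $P$,
\[
  \E{\exp\of{\lambda\of{\centered{\empkl nkP}}}}\ \le\ \E{\exp\of{\lambda\of{G-\E{G}}}}\ =\ e^{-4k\lambda/n}\of{1-\tfrac{2\lambda}{n}}^{-2k}\ \le\ \of{1-\tfrac{2\lambda}{n}}^{-2k}\quad(\lambda\ge0).
\]
Given this, \cref{conj:tb} is the Cram\'er--Chernoff tail bound for a sub-gamma variable with variance proxy of order $k/n^{2}$ and scale of order $1/n$: optimizing $\lambda$ in $\PR{\centered{\empkl nkP}\ge\eps}\le e^{-\lambda\eps}\of{1-2\lambda/n}^{-2k}$ produces the two-regime exponent $\min\set{n^{2}\eps^{2}/(k-1),\,n\eps}$ up to absolute constants, and the lower tail is handled identically with $\lambda<0$ (and is in any case vacuous once $\eps>\E{\empkl nkP}$, since $\empkl nkP\ge0$ and $\E{\empkl nkP}\le(k-1)/n$). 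So the task reduces to the central MGF bound, which I would prove by induction on $k$.

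\emph{Base case.} Trivially $\empkl n1P=0$. For $k=2$, write $\empkl n2P=\KL\diver{\of{X/n,\,1-X/n}}{\of{p,\,1-p}}$ with $X\sim\mathrm{Bin}(n,p)$. The map $q\mapsto\KL\diver{\of{q,\,1-q}}{\of{p,\,1-p}}$ is convex with minimum $0$ at $q=p$, so $\set{\empkl n2P\ge\eps}=\set{X/n\ge q_{+}}\cup\set{X/n\le q_{-}}$ for the two roots of $\KL\diver{\of{q,\,1-q}}{\of{p,\,1-p}}=\eps$ (each set taken empty if the root does not exist); the exact Chernoff bound for the binomial bounds each of $\PR{X/n\ge q_{+}}$ and $\PR{X/n\le q_{-}}$ by $e^{-n\eps}$, so $\PR{\empkl n2P\ge\eps}\le2e^{-n\eps}$ for every $\eps\ge0$. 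Integrating, $\E{\exp\of{\lambda\empkl n2P}}\le1+2\lambda\int_{0}^{\infty}e^{(\lambda-n)t}\intd t=\frac{n+\lambda}{n-\lambda}\le\of{1-2\lambda/n}^{-1}$ for $\lambda\in[0,n/2)$; multiplying by $e^{-\lambda\E{\empkl n2P}}\le1$ gives the same bound on the central MGF, which is $\le\of{1-2\lambda/n}^{-2k}$ for $k\le2$.

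\emph{Inductive step ($k\ge3$).} Peel off one coordinate via the chain rule for relative entropy: with $B=\set{1,\dots,k-1}$, $X_{B}=\sum_{i\in B}X_{i}$, $p_{B}=\sum_{i\in B}p_{i}$, and $P_{|B}=\of{p_{1}/p_{B},\dots,p_{k-1}/p_{B}}$,
\[
  \empkl nkP\ =\ \KL\diver[\big]{\of{X_{B}/n,\,X_{k}/n}}{\of{p_{B},\,p_{k}}}\ +\ \frac{X_{B}}{n}\,\KL\diver[\big]{\of{X_{1}/X_{B},\dots,X_{k-1}/X_{B}}}{P_{|B}}\,,
\]
and, conditionally on $X_{k}$ (equivalently $X_{B}=m:=n-X_{k}$), the second divergence is distributed as $\empkl m{k-1}{P_{|B}}$ because $(X_{1},\dots,X_{k-1})$ given $X_{B}=m$ is $\Multi\of{m; P_{|B}}$. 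Put $\mu=\lambda m/n$, so $\mu<m/2$ exactly when $\lambda<n/2$, and let $\nu(m)=\E{\empkl m{k-1}{P_{|B}}}$. The inductive hypothesis for $k-1$ outcomes gives, for every $m$,
\[
  \E{\exp\of{\mu\of{\empkl m{k-1}{P_{|B}}-\nu(m)}}}\ \le\ \of{1-\tfrac{2\mu}{m}}^{-2(k-1)}\ =\ \of{1-\tfrac{2\lambda}{n}}^{-2(k-1)}\,,
\]
and since $\mu\,\empkl m{k-1}{P_{|B}}=\lambda\cdot\tfrac{m}{n}\empkl m{k-1}{P_{|B}}$, this says the weighted term $\tfrac{X_{B}}{n}\KL\diver{\cdot}{P_{|B}}$ of the chain rule has conditional centered MGF at most $\of{1-2\lambda/n}^{-2(k-1)}$ given $X_{B}=m$ — independent of $m$, as the random weight $X_{B}/n$ has cancelled the sample size in the gamma rate. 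The binary term being a function of $X_{k}$ alone, conditioning on $X_{k}$, applying this bound, and factoring out the mean shifts (using $\E{\empkl nkP}=\E{h(X_{k})}$, where $h(X_{k}):=\KL\diver[\big]{\of{X_{B}/n,\,X_{k}/n}}{\of{p_{B},\,p_{k}}}+\tfrac{X_{B}}{n}\nu(X_{B})$ is a function of $X_{k}$ alone) yields
\[
  \E{\exp\of{\lambda\of{\centered{\empkl nkP}}}}\ \le\ \of{1-\tfrac{2\lambda}{n}}^{-2(k-1)}\cdot\E{\exp\of{\lambda\of{h(X_{k})-\E{h(X_{k})}}}}\,.
\]
It remains to bound the central MGF of the single-coordinate function $h(X_{k})$ by $\of{1-2\lambda/n}^{-2}$, and this is the one genuinely delicate estimate: one applies the base case to the binary term of $h$, and must show that the correction $\tfrac{X_{B}}{n}\nu(X_{B})$ — which lies in $[0,(k-2)/n]$ by $\nu(m)\le\log\of{1+(k-2)/m}\le(k-2)/m$ (the mean bound from the introduction), but is in fact nearly constant, since $m\,\nu(m)$ varies slowly while $X_{B}$ concentrates — inflates the central MGF by at most a further factor $\of{1-2\lambda/n}^{-1}$, which brings the total to $\of{1-2\lambda/n}^{-2k}$ and completes the induction.

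The main obstacle is exactly this last estimate. The rate parameter $n/2$ survives \emph{for free} through the weight-cancellation identity, but one must bound the central MGF of $h(X_{k})$ — a binary empirical relative entropy perturbed by the bounded, slowly varying correction $\tfrac{X_{B}}{n}\nu(X_{B})$ — by exactly $\of{1-2\lambda/n}^{-2}$, uniformly over $\lambda<n/2$ and all $P$; the crude bound $\tfrac{X_{B}}{n}\nu(X_{B})\le(k-2)/n$ is far too lossy (it leaks a factor growing with $k$ at small $\lambda$), so one has to exploit the near-constancy of the correction quantitatively, controlling how $m\,\nu(m)$ depends on $m$ alongside the concentration of $X_{B}$. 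Everything else — the chain rule, the weight cancellation, the conditioning — is routine.
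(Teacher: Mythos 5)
Your overall reduction---prove that the centered cumulant generating function of $\empkl nkP$ is dominated by that of a gamma distribution with shape $O(k)$ and rate $n/2$, then apply the sub-gamma Cram\'er--Chernoff calculus---is exactly the paper's strategy, and your base case (the exact binomial Chernoff bound giving $\PR{\empkl n2P\geq\eps}\leq 2e^{-n\eps}$, then integrating the tail) is essentially the paper's \cref{lem:halfkl-domexpo,prop:nonneg-cgf-expo-dom}. But your inductive step is not a proof: all of the difficulty is concentrated in the final estimate, which you state and explicitly leave open, namely that the centered moment generating function of $h(X_k)=\KL\diver{\of{X_B/n,X_k/n}}{\of{p_B,p_k}}+\frac{X_B}{n}\nu(X_B)$ is at most $\of{1-2\lambda/n}^{-2}$. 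You rightly note that the crude bound $0\leq\frac{X_B}{n}\nu(X_B)\leq (k-2)/n$ is too lossy and that one must control the variation of $m\mapsto m\,\nu(m)$ against the concentration of $X_B$; but $\nu(m)=\E{\empkl m{k-1}{P_{|B}}}$ has no closed form, its dependence on $m$ is delicate (the heuristic $\nu(m)\approx (k-2)/(2m)$ is only valid once $m$ is large relative to the inverse of the smallest conditional probability, and fails badly for small $m$, which occurs with non-negligible probability when $p_B$ is small), and any slack in this one-coordinate bound compounds multiplicatively through all levels of the induction, so even a bound of the form $\of{1-2\lambda/n}^{-2}(1+\delta_k)$ would not close the argument. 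This chain-rule conditioning is essentially the reduction of the earlier works that the paper explicitly identifies as the one that ``does not adapt as easily to the centered case'': the random conditional mean $\nu(X_B)$ is precisely what breaks. The paper sidesteps it entirely by a different decoupling: split $\phi(x)=x\log x-x+1$ into its monotone pieces $\phi_+$ and $\phi_-$, apply Cauchy--Schwarz, and use negative association of the multinomial coordinates (\cref{lem:naprops,lem:namultinom}) to bound the joint centered MGF by a product of $2k$ one-dimensional centered MGFs, each of which is handled by the exponential stochastic domination with no conditional means in sight.

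A second, smaller gap: your base-case computation $\E{\exp(\lambda Z)}\leq 1+2\lambda\int_0^\infty e^{(\lambda-n)x}\intd x$ is only valid for $\lambda\geq 0$; for $\lambda<0$ the integrand $\lambda e^{\lambda x}\PR{Z\geq x}$ is nonpositive, so substituting an upper bound on the tail reverses the inequality. You do need the centered MGF bound for negative $\lambda$, since the lower tail of the conjecture is not vacuous in the regime $0\leq\eps\leq\E{\empkl nkP}$ and requires a genuine sub-Gaussian estimate there. The fix is the one the paper uses in \cref{prop:nonneg-cgf-expo-dom}: write the \emph{centered} MGF via \cref{lem:nonneg-cgf-rep} so that the integrand becomes $t\of{e^{tx}-1}\PR{Z\geq x}$, which is nonnegative for every sign of $t$, and only then substitute $\PR{Z\geq x}\leq e^{-x}$.
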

By standard results on subgamma random variables (e.g.~\cite[\S2.4, Theorem
2.3]{bou_lug_mas_concentration_2013}), \cref{conj:tb} is equivalent to upper
bounding the central moment generating function of $\empkl nkP$ by that of a
gamma distribution with shape $C_{1}(k-1)$ and rate $C_{2}n$ on a ball around the
origin of radius $C_{3}n$ for positive constants $C_{1}$, $C_{2}$, and $C_{3}$.

Similarly, in \cite{agrawal_deriving_2020}, the author conjectured that the bound on the
non-centered moment generating function of $\empkl nkP$ by that of the gamma
distribution with shape $k - 1$ and rate $n$ on the positive reals
\cite[Theorem I.3]{agrawal_finitesample_2020} also holds for the centered
version (i.e.~with constants $C_1=C_2=1$, but for the positive reals), which from the
above would suffice to prove a one-sided version of \cref{conj:tb}:
\begin{conjecture}[{\cite[Conjecture 4.4.5]{agrawal_deriving_2020}}]
  \label{conj:mgf}
  For every $n$, $k$, and $P$, we have for all $0\leq t<n$ that
  \[
    \log\E{\exp\of{t\of{\centered{\empkl nkP}}}}
    \leq
     (k-1) \log\of{\frac{\exp\of{-t/n}}{1-t/n}}\,,
  \]
  where the right-hand side is the centered log moment generating function
  of a gamma distribution of shape $k-1$ and rate $n$.
\end{conjecture}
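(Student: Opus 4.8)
The plan is to prove \cref{conj:mgf} by induction on $k$, using the chain rule for relative entropy to peel off one coordinate at a time, thereby reducing the multinomial on $k$ categories to a binomial together with a conditionally independent multinomial on $k-1$ categories. The case $k=1$ is trivial, since then $\empkl n1P\equiv 0$, and the base case $k=2$ is the binomial, treated below. For the inductive step, write $P=(p_1,p_2,\dots,p_k)$, put $m:=n-X_1$, and let $Q:=(p_2,\dots,p_k)/(1-p_1)$, so that conditionally on $X_1$ one has $(X_2,\dots,X_k)\sim\Multi\of{m;Q}$. The chain rule gives the pointwise identity
\[
  \empkl nkP
  \;=\;
  \underbrace{\tfrac{X_1}{n}\log\tfrac{X_1}{np_1}+\tfrac{m}{n}\log\tfrac{m}{n(1-p_1)}}_{=:\;W}
  \;+\;
  \tfrac{m}{n}\,\KL\diver[\big]{\of{\tfrac{X_2}{m},\dots,\tfrac{X_k}{m}}}{Q}\,,
\]
where $W$ is (a realization of) the binomial empirical relative entropy $\empkl n2{(p_1,1-p_1)}$, and conditionally on $X_1$ the second summand equals $\tfrac mn V_{m,k-1,Q}$.

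The crucial feature is the prefactor $\tfrac mn\le 1$. Conditionally on $X_1$, the inductive hypothesis bounds the centered log-MGF of $V_{m,k-1,Q}$ at parameter $s$ by $(k-2)\log\tfrac{\exp\of{-s/m}}{1-s/m}$ for $0\le s<m$; choosing $s=tm/n$, which satisfies $s<m$ exactly when $t<n$ and for which $s/m=t/n$, this becomes $(k-2)\log\tfrac{\exp\of{-t/n}}{1-t/n}$ — a bound whose rate is $n$, \emph{independent} of the random count $m$. Now let $Y:=\E{\empkl nkP\mid X_1}=W+\tfrac mn\nu_m$, a deterministic function of $X_1$, where $\nu_m:=\E{V_{m,k-1,Q}}$; note $\E{Y}=\E{\empkl nkP}$. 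Writing $\centered{\empkl nkP}=(Y-\E{Y})+\tfrac mn\centered{V_{m,k-1,Q}}$ and taking the conditional expectation of the last term via the above bound, the tower property gives
\[
  \E{\exp\of{t\,\centered{\empkl nkP}}}
  \;\le\;
  \of{\tfrac{\exp\of{-t/n}}{1-t/n}}^{k-2}\cdot\E{\exp\of{t(Y-\E{Y})}}\,.
\]
Hence it remains to establish the single-variable bound $\E{\exp\of{t(Y-\E{Y})}}\le\tfrac{\exp\of{-t/n}}{1-t/n}$ for $0\le t<n$, i.e.\ that $Y$ is sub-gamma with shape $1$ and rate $n$; for $k=2$, where $V_{m,1,Q}\equiv 0$ and hence $Y=W$, this is exactly the binomial base case.

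For the base case one needs the binomial empirical relative entropy $\empkl n2{(p_1,1-p_1)}$ to have centered log-MGF at most $\log\tfrac{\exp\of{-t/n}}{1-t/n}=-t/n-\log(1-t/n)$. This should follow from sharp concentration estimates for the binomial, but with some care: because $\E{W}\le\log(1+1/n)<1/n$, the non-centered bound $\E{\exp\of{tW}}\le(1-t/n)^{-1}$ of \cite[Theorem I.3]{agrawal_finitesample_2020} does \emph{not} combine with the centering to yield the claim (that would require $\E{W}\ge 1/n$), and a Chernoff union bound over the two tails of the binomial loses a spurious factor $2$; one instead has to pin down $\E{\exp\of{tW}}$ sharply to first order in $t$, e.g.\ via Stirling/local-limit estimates for binomial coefficients (indeed the target is attained with equality in degenerate cases, so no slack is available).

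The main obstacle is controlling the correction term when $k\ge 3$. By Paninski's bound, $\nu_m=\E{V_{m,k-1,Q}}\le\log(1+\tfrac{k-2}{m})$, so $m\mapsto m\nu_m$ is increasing and bounded above by $k-2$; consequently $x_1\mapsto\tfrac{n-x_1}{n}\nu_{n-x_1}$ is a decreasing function of $X_1$ lying in an interval of width at most $(k-2)/n$, and in fact essentially constant (of order $\tfrac{k-2}{2n}$) throughout the bulk of the distribution of $X_1$, varying appreciably only in the extreme upper tail — where $W$ itself is already large. One must leverage this to conclude that $Y=W+\tfrac mn\nu_m$ is still sub-gamma with shape $1$ and rate $n$; the cleanest route is probably to carry the induction with a strengthened hypothesis that already permits adding such a bounded monotone correction to the (binomial or multinomial) empirical relative entropy, and it may help to peel off the coordinates in increasing order of the $p_i$, so that $p_1=\min_i p_i$ and $W\le\log\tfrac{k}{k-1}$ on the lower tail of $X_1$. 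A cruder treatment of this correction — bounding its contribution by, say, a Hoeffding estimate and paying constant factors in the shape and rate — still yields a centered sub-gamma bound with shape $O(k)$ and rate $\Omega(n)$ valid on a ball of radius $\Omega(n)$, which by the subgamma characterization recalled above already suffices for \cref{conj:tb}.
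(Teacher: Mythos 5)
You should first be clear that \cref{conj:mgf} is stated in this paper as an open conjecture: the paper proves only the weaker \cref{thm:mgf} (shape $2k$, rate $n/2$) and explicitly says its result falls short of \cref{conj:mgf}. Your submission is, accordingly, a plan of attack rather than a proof, and you are candid about this; the two places where you write ``it remains to establish'' and ``one must leverage this to conclude'' are exactly where the difficulty lives, and neither is resolved. Concretely: (i) \emph{the base case}. You need the centered log-MGF of the binomial empirical relative entropy to be at most $\log\of{\exp\of{-t/n}/(1-t/n)}$, i.e.\ shape exactly $1$. You correctly observe that the known non-centered bound of \cite{agrawal_finitesample_2020} does not yield this after centering; note also \cref{rmrk:expo-const}, which records that stochastic domination of the half-divergences by an exponential --- the only quantitative handle your sketch actually extracts from the binomial --- provably cannot give constant $C=1$, so the base case requires genuinely finer distributional information that you do not supply. (ii) \emph{the correction term}. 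Your induction allocates the full shape budget $k-1$ as $(k-2)+1$, leaving zero slack, yet the quantity to be controlled in the final step is not $W$ but $Y=W+\frac{m}{n}\nu_m$, i.e.\ $W$ plus a nonconstant monotone function of $X_1$ whose range is of order $(k-2)/n$. Since, by your own remark, the shape-$1$ target for $W$ alone admits essentially no slack, this perturbation cannot be absorbed by a crude estimate, and the ``strengthened inductive hypothesis'' you gesture at is never formulated. As it stands the argument proves nothing beyond what is already known.

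On the relation to the paper: the chain-rule peeling you propose is precisely the reduction of \cite{agrawal_finitesample_2020} that this paper describes as not adapting easily to the centered case, and the obstruction you meet in (ii) is the concrete form of that difficulty. The paper instead breaks the dependence among the $X_i$ via negative association together with Cauchy--Schwarz (\cref{prop:multired}), accepting a factor of $2$ in both shape and rate; that route proves \cref{conj:tb} but not \cref{conj:mgf}. Your closing observation --- that a crude treatment of the correction term still yields shape $O(k)$ and rate $\Omega(n)$ --- would, if actually carried out, recover a statement of the strength of \cref{thm:mgf}, but the proposal as written does not prove \cref{conj:mgf}, which remains open.
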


Significant progress towards these conjectures was made in recent work of
\textcite{bha_pen_sharp_2021}, who established near-optimal bounds in the
case that the probabilities of the multinomial distribution are bounded
away from $0$:
\begin{theorem}[Equivalent form of {\cite[Theorem 1]{bha_pen_sharp_2021}}]
  \label{thm:bha_pen_sharp_2021}
  There are positive constants $C_{1}$ and $C_{2}$ such that for
  all $n$, $k$, and $P$, it holds that
  \[
    \log\E{\exp\of{t\of{\centered{\empkl nkP}}}}
    \leq
     C_{1}\cdot k\log^{4}\of{\frac{k}{\min_{i} p_{i}}}\cdot \log\of{\frac{\exp\of{-C_{2}t/n}}{1-C_{2}t/n}}\,,
  \]
  for all $\abs t < C_{2}n$.
\end{theorem}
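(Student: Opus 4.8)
The plan is to first derive \cref{thm:bha_pen_sharp_2021} from Theorem~1 of \textcite{bha_pen_sharp_2021} --- stated there as a two-sided tail bound on $\centered{\empkl nkP}$ --- via the same sub-gamma correspondence already invoked for \cref{conj:tb} (\cite[\S2.4, Theorem 2.3]{bou_lug_mas_concentration_2013}): a tail bound $\PR{\abs{\centered{\empkl nkP}}\geq\eps}\leq c_1\exp\of{-c_2\min\set{\eps^2/v,\eps/b}}$ is equivalent, up to adjusting constants, to $\log\E\exp\of{t\of{\centered{\empkl nkP}}}\leq v t^2/(1-bt)$ on $\abs{t}<1/b$, and the right-hand side of \cref{thm:bha_pen_sharp_2021} is of exactly this form with $v\asymp k\log^{4}\of{k/\min_i p_i}/n^{2}$ and $b\asymp 1/n$ (using $\log\frac{e^{-s}}{1-s}\leq \frac{s^2}{2(1-s)}$). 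So all the content is in the tail bound, and for completeness I would sketch how I would obtain that.

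View $\empkl nkP = V(Z_1,\dotsc,Z_n)$ as a function of $n$ i.i.d.\ samples $Z_j\sim P$, with $X_i = \abs{\set{j : Z_j = i}}$, and apply the entropy method on this product space (exponential Efron--Stein / modified log-Sobolev, \cite{bou_lug_mas_concentration_2013}). Resampling $Z_j$ to an independent copy shifts one unit of empirical mass from coordinate $i$ to coordinate $i'$, and a mean-value estimate shows that, up to lower-order terms, the change $V - V_j'$ equals $\tfrac1n\of{\log\tfrac{X_i}{np_i} - \log\tfrac{X_{i'}}{np_{i'}}}$. The goal is then a self-bounding-type inequality bounding $\sum_j$ of the conditional expectation over the resampled coordinate of $(V-V_j')_+^2$ by $aV + c$, with $a$ and $c$ polylogarithmic (in $k/\min_i p_i$) multiples of $1/n$ and $k/n^{2}$; this yields sub-Gaussian concentration in the body and a sub-exponential tail about $\E V$, recovering the moderate-deviation part of \cref{thm:bha_pen_sharp_2021} after using $\E V\leq (k-1)/n$. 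The far tail, where the exponent is genuinely linear in $n\eps$ with scale $1/n$, is instead supplied by the method-of-types bound $\PR{\empkl nkP\geq\eps}\leq\binom{n+k-1}{k-1}\exp\of{-n\eps}$ quoted above, which already has the correct rate; the two regimes are glued after truncating $V$.

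The main obstacle is precisely this gluing together with the self-bounding constants. The jackknife variation $V - V_j'$ is not deterministically of order $1/n$: resampling onto a coordinate $i'$ that currently has $X_{i'} = 0$ can change $V$ by $\log(np_{i'})/n$, i.e.\ up to order $\log(n)/n$, so the entropy method cannot be applied to $V$ itself with scale $1/n$. One must instead truncate $V$ at a moderate level --- using that the offending under-representation events $\set{X_{i'} \ll np_{i'}}$ are individually improbable and collectively cheap, controlled e.g.\ by a dyadic decomposition over the scales $X_{i'}/np_{i'}\in[2^{-\ell-1},2^{-\ell}]$ for $\ell$ up to order $\log(1/\min_i p_i)$, bounding how many coordinates can fall in each scale and charging the cost back to $V$ and $\E V$ --- apply the entropy method to the truncated variable, and absorb the overshoot into the types bound. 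It is this truncation, together with the squaring in the self-bounding inequality that promotes a $\log(1/\min_i p_i)$ factor to its square, that produces the overall $\log^{4}\of{k/\min_i p_i}$ and confines the bound to being near-optimal only when $\min_i p_i$ is not too small. (The subsequent improvement removing the polylogarithmic factor avoids truncation altogether, using the chain rule for relative entropy to write $\empkl nkP$ as a weighted sum of conditionally binomial empirical divergences, for which the case $k=2$ already gives the sharp answer.)
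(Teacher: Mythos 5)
This statement is \cref{thm:bha_pen_sharp_2021}, a restatement of Theorem~1 of \textcite{bha_pen_sharp_2021}; the paper does not prove it, and the only content the paper itself is responsible for is the translation into ``equivalent form,'' namely the standard subgamma correspondence between two-sided tail bounds of the shape $c_1\exp\of{-c_2\min\set{n^2\eps^2/v,\,n\eps}}$ and central-MGF bounds of the shape $v\log\of{\exp\of{-Ct/n}/(1-Ct/n)}$ on $\abs t<Cn$, up to adjusting constants. Your first paragraph identifies exactly this correspondence (the same \cite[\S2.4, Theorem 2.3]{bou_lug_mas_concentration_2013} machinery the introduction invokes for \cref{conj:tb}), so on the part of the statement the paper actually supplies, you are correct.

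The remaining two paragraphs attempt to reconstruct the proof of the cited theorem itself, and there the proposal goes astray in two ways. First, the route is not the one \textcite{bha_pen_sharp_2021} take: as this paper's introduction explains, their Theorem~1 is proved by extending the reduction of the multinomial to the binomial case from \cite{agrawal_finitesample_2020} --- a decomposition exploiting the dependence among the $X_i$, which is why their bound carries a $k$ times polylogarithmic factor --- and then bounding the binomial pieces via the subgamma toolkit, not by the entropy method / exponential Efron--Stein on the product space of samples. Your closing parenthetical also has the attribution backwards: the chain-rule decomposition into conditionally binomial divergences is the mechanism of the \emph{prior} works, whereas the ``subsequent improvement'' (the present paper) departs from it, using negative association of the multinomial coordinates together with a Cauchy--Schwarz split of $\phi$ into $\phi_+$ and $\phi_-$. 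Second, even taken on its own terms the sketch is not a proof: you yourself flag the truncation of the $\log(n)/n$-sized jackknife increments and the gluing with the method-of-types bound as ``the main obstacle'' and then leave both unexecuted, and the claim that the self-bounding constants come out to exactly $\log^4\of{k/\min_i p_i}$ is asserted rather than derived. Since the statement is a citation, the correct justification is the reference plus the subgamma translation; a speculative reproof of the source theorem is neither needed nor, as written, complete.
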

However, \cref{thm:bha_pen_sharp_2021} does not suffice to prove \cref{conj:tb}
due to the additional polylogarithmic factors in $k$ and $\min_{i} p_{i}$.

In this work, we close this gap, proving \cref{conj:tb} by giving an upper
bound on the centered moment generating function of $\empkl nkP$ with
explicit constants, though falling short of those conjectured by
\cref{conj:mgf}.
\begin{theorem}[This work, main result]
  \label{thm:mgf}
  For every $n$, $k$, and $P$, we have for all $t<n/2$ that
  \[
    \log\E{\exp\of{t\of{\centered{\empkl nkP}}}}
    \leq
    \min\set{
      \frac{4kt^2/n^2}{1-2t/n} 
      ,\,
      2k \log\of{\frac{\exp\of{-2t/n}}{1-2t/n}}
    }
    \,,
  \]
  and so in particular the centered cumulant generating function of
  $\empkl nkP$ is bounded by that of
  a gamma distribution with shape $2k$\footnote{In fact, the techniques
    in this work are capable of establishing shape $Ck$ for a constant
    $1<C<2$, see \cref{rmrk:expo-const}. More generally, here and in the corollaries
    we give explicit values of constants, but have not tried to optimize them.}
  and rate $n/2$.
\end{theorem}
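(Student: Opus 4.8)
The plan is to reduce the multinomial case to the binomial case $k=2$ via the chain rule for relative entropy, peeling off one outcome at a time, and to argue by induction on $k$. Write $N_i = n - X_1 - \dots - X_{i-1}$ (so $N_1 = n$) and let $q_i = p_i/(1-p_1-\dots-p_{i-1})$ be the conditional probability that a sample equals $i$ given that it lies outside $\set{1,\dots,i-1}$; let $d(a\|b) = a\log(a/b) + (1-a)\log((1-a)/(1-b))$ denote the binary relative entropy. The chain (grouping) rule gives
\[
  \empkl nkP \;=\; \sum_{i=1}^{k-1} \frac{N_i}{n}\, d\of{\frac{X_i}{N_i} \,\big\|\, q_i}\,,
\]
and the structural point is that, conditioned on $X_1,\dots,X_{i-1}$ (equivalently on $N_i$), the count $X_i$ is binomial with parameters $N_i$, $q_i$ and is conditionally independent of $X_1,\dots,X_{i-1}$ given $N_i$. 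Thus each summand, after conditioning, is a rescaled binomial empirical relative entropy; equivalently, splitting off a single outcome, $\empkl nkP = d\of{\frac{X_1}{n} \,\big\|\, p_1} + \frac{n-X_1}{n}\,\empkl{n-X_1}{k-1}{P'}$ with $P' = (p_2,\dots,p_k)/(1-p_1)$, and conditioned on $X_1$ the residual is a fresh $(k-1)$-outcome empirical relative entropy on $n-X_1$ samples.

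The base case is a bound on the centered cumulant generating function of the binomial empirical relative entropy: for $X\sim\mathrm{Bin}(m,q)$ and all $\abs s < m/2$,
\[
  \log\E{\exp\of{s\of{\centered{\empkl m2{(q,1-q)}}}}}
  \;\le\;
  c_0\,\log\of{\frac{\exp\of{-2s/m}}{1-2s/m}}
\]
for an absolute constant $c_0 \le 2$; this should follow from standard binomial concentration, for instance the two-sided Chernoff bound combined with a local-limit estimate of $\E{\exp\of{s\cdot m\cdot\empkl m2{(q,1-q)}}}$, exploiting that $2m\,\empkl m2{(q,1-q)}$ is $\chi^2_1$-like. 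The reduction then rests on the following scaling: in the decomposition above the $i$-th summand's exponential appears, after conditioning, at parameter $s = tN_i/n$, and with $m = N_i$ this makes the ratio $2s/m$ in the gamma term equal to $2t/n$ (independent of the residual count), while the admissibility $\abs s < m/2$ becomes exactly $\abs t < n/2$. Hence each peeled binomial piece contributes a gamma factor of shape at most $c_0$, regardless of how many samples remain.

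The substance of the argument is the bookkeeping of the mean. Assume the theorem for $k-1$. Conditioning on $X_1$ and applying it to the residual at parameter $t(n-X_1)/n$, the scaling above yields
\[
  \E{\exp\of{t\cdot\frac{n-X_1}{n}\,\empkl{n-X_1}{k-1}{P'}} \mid X_1}
  \;\le\;
  \exp\of{t\,g(X_1)}\cdot\of{\frac{\exp\of{-2t/n}}{1-2t/n}}^{2(k-1)}\,,
\]
where $g(x) = \frac{n-x}{n}\,\E{\empkl{n-x}{k-1}{P'}}$ — and crucially the exponent here is the \emph{exact} conditional mean, $g(x) = \E{\empkl nkP \mid X_1=x} - d(x/n\|p_1)$, so that $\E{d(X_1/n\|p_1) + g(X_1)} = \E{\empkl nkP}$. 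It therefore suffices to bound the moment generating function of the \emph{perturbed} binomial relative entropy $d(X_1/n\|p_1) + g(X_1)$ — which is exactly $\E{\empkl nkP \mid X_1}$ — by $\exp\of{t\,\E{\empkl nkP}}$ times one more gamma factor of shape at most $c_0 \le 2$; then $\E{\exp\of{t\,\empkl nkP}} \le \exp\of{t\,\E{\empkl nkP}}\of{\frac{\exp\of{-2t/n}}{1-2t/n}}^{2(k-1)+c_0}$, and since $2(k-1)+c_0 \le 2k$ this closes the induction, the base case $k=2$ being this same perturbed-binomial bound in the trivial case $g\equiv 0$. The quadratic term in the $\min$ then follows from $-u-\log(1-u)\le \frac{u^2/2}{1-u}$ at $u = 2t/n$.

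I expect this perturbed-binomial estimate to be the main obstacle. Here $g$ is nonnegative, monotone, and bounded by $\frac{k-2}{n}$ (using Paninski's bound $\E{\empkl\ell{k-1}{P'}} \le \log(1+\frac{k-2}{\ell})$), but $\frac{k-2}{n}$ is far too large for a crude treatment: bounding $g(X_1)$ by its range via Hoeffding and combining with the unperturbed bound by Cauchy--Schwarz would lose a factor polynomial in $k$. The point must be that $g$ varies on a scale much finer than $1/n$ over the region where $\mathrm{Bin}(n,p_1)$ concentrates — writing $g(x) = \frac1n\,\phi(n-x)$ with $\phi(\ell) = \E{\ell\,\empkl\ell{k-1}{P'}}$, the function $\phi$ is bounded (of order $k$) and slowly varying because $\ell\,\empkl\ell{k-1}{P'}$ converges as $\ell\to\infty$ — while on the extreme tails of $\mathrm{Bin}(n,p_1)$, where $g$ need not be nearly constant, the binomial relative entropy $d(X_1/n\|p_1)$ is itself already large enough to absorb it. Making this precise across the full range of $p_1$ (including where the residual sample count $n - X_1$ is small) is where the work lies; the natural route is to isolate a sufficient condition on $g$ — boundedness, monotonicity, and a Lipschitz-type bound tied to the local standard deviation of the binomial — under which the centered cumulant generating function of $d(X/m\|q)+g(X)$ stays within a constant factor $c_0$ of the unperturbed one, with $c_0 = 2$ in the easy version and $c_0 < 2$ (hence the footnote's $Ck$) under a sharper such condition, and then verify that the perturbations produced by the recursion meet it.
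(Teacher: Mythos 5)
Your plan follows the chain-rule reduction of the earlier non-centered work, and the step you yourself flag as ``the main obstacle'' --- the perturbed-binomial estimate, i.e.\ bounding the centered moment generating function of $d(X_1/n\|p_1)+g(X_1)$ with a shape constant $c_0\le 2$ that is uniform in $k$, $n$, $p_1$, and the particular perturbation $g$ produced by the recursion --- is a genuine gap, not a routine verification. This is precisely where the centered chain-rule approach gets stuck: the conditional mean $g(X_1)=\frac{n-X_1}{n}\E{\empkl{n-X_1}{k-1}{P'}}$ has range of order $k/n$, which is $k$ times the scale $1/n$ on which the binomial term itself fluctuates, so any argument that does not exploit cancellation very carefully (your Cauchy--Schwarz/Hoeffding fallback, for instance) loses factors polynomial or polylogarithmic in $k$ --- this is essentially how the $\log^{4}\of{k/\min_i p_i}$ factor arises in \cite{bha_pen_sharp_2021}, which carries out a version of your program. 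Your heuristic that $g$ is slowly varying where the binomial concentrates and is absorbed by $d(X_1/n\|p_1)$ in the tails is reasonable, but no precise sufficient condition is stated, let alone verified uniformly over the recursion (including the regime where $n-X_1$ is small and $\E{\empkl{\ell}{k-1}{P'}}$ is far from its large-$\ell$ limit), and it is not clear the per-level constant can be kept at exactly $2$ rather than $2+o(1)$, which would break the telescoping $2(k-1)+c_0\le 2k$.

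The paper avoids this problem entirely by a different reduction: it writes $\empkl nkP=\sum_i p_i\phi(X_i/np_i)$ with $\phi(x)=x\log x-x+1$, splits $\phi=\phi_++\phi_-$ into monotone pieces, separates the two sums by Cauchy--Schwarz (this is where the rate degrades from $n$ to $n/2$), and then uses that the multinomial coordinates are negatively associated, so the expectation of the product of the resulting monotone functions of the individual $X_i$ is at most the product of the expectations. Each single-coordinate term is then stochastically dominated by an exponential random variable via Hoeffding's inequality, and a general lemma converts such domination into a centered-mgf bound of gamma type with shape $2$ per factor, giving $2k$ in total after the Cauchy--Schwarz square roots. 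No conditioning, no induction, and no perturbed means appear. To salvage your route you would need to actually prove the perturbed-binomial lemma with a $k$-independent constant; as written, the proposal is a plausible program rather than a proof.
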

\begin{remark}\label{rmrk:negative-mgf}
  At first glance the validity of the above bound for the entire negative real
  line rather than an interval of length $Cn$ appears qualitatively stronger
  than what is necessary for \cref{conj:tb}, but in fact such bounds are
  equivalent because $\empkl nkP\geq 0$ implies the trivial upper bound
  for $t\leq 0$ that
  \[
  \log\E{\exp\of{t\of{\centered{\empkl nkP}}}} \leq \abs{t}\cdot \E{\empkl nkP}
  \leq \abs{t}\cdot \frac{k-1}{n}
  \,,
  \]
  which already establishes the claim for $t\leq -Cn$ for any positive constant
  $C$, and in fact is stronger than \cref{thm:mgf} for sufficiently negative $t$.
\end{remark}
\begin{corollary}\label{cor:tail}
  For all $\eps\geq 0$ we have that
  \begin{multline*}
    \PR{\empkl nkP \geq \E{\empkl nkP} + \eps}
    \leq \of{1 + \frac{n\eps}{4k}}^{2k}\cdot \exp\of{-\frac{n\eps}2}
    \\
    \leq \exp\of{-\frac{3n^2\eps^2}{48k+8n\eps}}
    \leq \exp\of{-\min\set{\frac{n^2\eps^2}{24k},\frac{n\eps}8}}
  \end{multline*}
  and for all $0\leq\eps \leq 2k/n$ we have that
  \[
    \PR{\empkl nkP \leq \E{\empkl nkP}-\eps}
    \leq \exp\of{-k\of{1-\sqrt{1-\frac{n\eps}{2k}}}^2}
    \leq \exp\of{-\frac{n^2\eps^2}{16k}}
    \,.
  \]
  In particular, \cref{conj:tb} holds with $c_1=2$ and $c_{2}=1/48$.
\end{corollary}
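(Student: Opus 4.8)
The plan is to feed the moment generating function estimate of \cref{thm:mgf} into the Cram\'er--Chernoff method and then relax the optimized exponent to the stated closed forms.

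For the upper tail I would use the second branch of the minimum in \cref{thm:mgf}, that $\log\E{\exp\of{t\of{\centered{\empkl nkP}}}}\le 2k\log\of{\frac{\exp(-2t/n)}{1-2t/n}}$ for $0\le t<n/2$, so that
\[
\PR{\empkl nkP\ge\E{\empkl nkP}+\eps}\le\exp\of{-t\eps+2k\log\of{\frac{\exp(-2t/n)}{1-2t/n}}}
\]
for every such $t$. Writing $u=2t/n\in[0,1)$ and $\delta=\tfrac{n\eps}{4k}$, the exponent becomes $-2k\of{\log(1-u)+(1+\delta)u}$, whose bracket is concave in $u$ and maximized at $u^\star=\tfrac{\delta}{1+\delta}\in(0,1)$ (equivalently $t^\star<n/2$); substituting back, and using $2k\delta=n\eps/2$, produces exactly $\of{1+\tfrac{n\eps}{4k}}^{2k}\exp(-n\eps/2)$. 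The remaining two inequalities in that chain are elementary. First, $\delta-\log(1+\delta)\ge\tfrac{3\delta^2}{6+4\delta}$ for all $\delta\ge0$ --- which I would check by noting that the difference vanishes at $\delta=0$ and has derivative $\tfrac{\delta^3}{(1+\delta)(3+2\delta)^2}\ge0$ --- is, after clearing denominators, precisely $\of{1+\tfrac{n\eps}{4k}}^{2k}e^{-n\eps/2}\le e^{-3n^2\eps^2/(48k+8n\eps)}$. Then splitting at $n\eps=3k$ and comparing $48k+8n\eps$ with $72k$ when $n\eps\le 3k$ and with $24n\eps$ when $n\eps\ge 3k$ gives the $\min$-form.

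For the lower tail, the weaker estimate $\exp(-n^2\eps^2/(16k))$ falls out of \cref{thm:mgf} directly: for $t<0$ the first branch $\tfrac{4kt^2/n^2}{1-2t/n}$ is at most $4kt^2/n^2$, so $\empkl nkP$ is sub-Gaussian to the left with variance proxy $8k/n^2$ and the standard sub-Gaussian tail bound applies. For the sharper form $\exp\bigl(-k(1-\sqrt{1-n\eps/(2k)})^2\bigr)$ I would instead use the stronger control that the analysis behind \cref{thm:mgf} in fact yields on the negative half-line --- that for $t<0$ the centered cumulant generating function is bounded by that of a gamma law of shape only $k$ (and rate $n/2$), reflecting that lower deviations of a relative entropy behave like the left tail of a sum of $\tfrac12\chi^2_1$'s. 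Optimizing the Chernoff bound for the left tail of $\mathrm{Gamma}(k,n/2)$ at deviation $\eps$ gives $\exp\bigl(-k(-x-\log(1-x))\bigr)$ with $x=n\eps/(2k)$, which is meaningful exactly in the stated range $0\le\eps\le 2k/n$; the inequalities $-x-\log(1-x)\ge(1-\sqrt{1-x})^2$ (which after the substitution $y=\sqrt{1-x}$ is just $\log y\le y-1$) and $(1-\sqrt{1-x})^2\ge x^2/4$ then produce both stated forms.

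Finally, \cref{conj:tb} follows by a union bound: since $\tfrac{n^2\eps^2}{16k}\ge\tfrac{n^2\eps^2}{24k}\ge\min\set{\tfrac{n^2\eps^2}{24k},\tfrac{n\eps}{8}}$, the sum of the upper- and lower-tail bounds is at most $2\exp\of{-\min\set{\tfrac{n^2\eps^2}{24k},\tfrac{n\eps}{8}}}$, and since $k-1\ge k/2$ for $k\ge2$ (the case $k=1$ being vacuous, as then $\empkl n1P\equiv0$) one has $\min\set{\tfrac{n^2\eps^2}{24k},\tfrac{n\eps}{8}}\ge\tfrac1{48}\min\set{\tfrac{n^2\eps^2}{k-1},n\eps}$, which is the conjecture with $c_1=2$ and $c_2=1/48$. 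Almost everything here is the Chernoff recipe followed by one-variable calculus inequalities; the one step that is not purely mechanical is securing the shape-$k$ (rather than shape-$2k$) control of the cumulant generating function for $t<0$ needed for the sharp left-tail bound, so that is where I expect the real work to lie, with the constant bookkeeping in the two relaxations a close second as the most error-prone piece to write out carefully.
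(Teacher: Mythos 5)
Your upper-tail argument, the final relaxations, and the deduction of \cref{conj:tb} all match the paper's proof (the paper compresses your inequality $\delta-\log(1+\delta)\ge\frac{3\delta^2}{6+4\delta}$ into a citation of Topsøe's bound $\log(1+x)\le\frac x2\cdot\frac{x+6}{2x+3}$, but it is the same inequality, and your optimization giving $\of{1+\frac{n\eps}{4k}}^{2k}e^{-n\eps/2}$ is exactly the convex conjugate computation the paper refers to). The gap is in the sharp lower-tail bound. You derive $\exp\of{-k\of{1-\sqrt{1-n\eps/(2k)}}^2}$ from the claim that for $t<0$ the centered cumulant generating function is bounded by that of a gamma law of shape $k$ and rate $n/2$, and you correctly flag this as the step where "the real work" would lie --- but that claim is not established anywhere, and it does not follow from \cref{thm:mgf}: for $s=2t/n<0$ the theorem's first branch is $k\cdot\frac{s^2}{1-s}$, whereas the shape-$k$ gamma CGF is $k\of{-s-\log(1-s)}$, and $\frac{s^2}{1-s}=s^2+O(s^3)$ exceeds $-s-\log(1-s)=\frac{s^2}2+O(s^3)$ near $0$. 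So as written, the sharp left-tail inequality rests on an unproved (and strictly stronger) hypothesis.

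The repair is that the stronger hypothesis is unnecessary: the stated bound is exactly the optimized Chernoff bound for the first branch $\frac{4kt^2/n^2}{1-2t/n}$ over $t<0$, which is what the paper does. Concretely, with $u=-2t/n>0$ the Chernoff exponent is $-\frac{n\eps}{2}u+\frac{ku^2}{1+u}$; since $\frac{d}{du}\frac{u^2}{1+u}=1-(1+u)^{-2}$, the minimizer satisfies $(1+u^\star)^{-2}=1-\frac{n\eps}{2k}$ (which is where the restriction $\eps\le 2k/n$ enters), and substituting $s=1+u^\star$ gives the value $-k(1-1/s)^2=-k\of{1-\sqrt{1-\frac{n\eps}{2k}}}^2$. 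This is the standard sub-gamma left-tail computation, and your subsequent relaxation $\of{1-\sqrt{1-x}}^2\ge x^2/4$ then closes the chain exactly as in the paper. With that substitution your proof is complete and coincides with the paper's.
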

\begin{remark}
  For $\eps>2k/n$, \cref{thm:mgf} implies that $\PR{\empkl nkP \leq \E{\empkl
nkP} - \eps} = 0$, but since $\E{\empkl nkP}\leq \log\of{1+\frac{k-1}n}\leq
\frac{k-1}n$ \cite{paninski_estimation_2003}, as in \cref{rmrk:negative-mgf}
this (and the corresponding part of \cref{cor:tail}) is subsumed by the
fact that $\empkl nkP\geq 0$ implies $\PR{\empkl nkP \leq \E{\empkl nkP} - \eps}
= 0$ for all $\eps> \E{\empkl nkP}$.
\end{remark}

Similarly, \cref{thm:mgf} also implies moment bounds, recovering a weaker
version of a variance upper bound of
\cite{mar_jia_tan_now_wei_concentration_2020} and strengthening the moment
bounds from \cite{agrawal_finitesample_2020,bha_pen_sharp_2021}.
\begin{corollary}\label{cor:moments}
  We have that $\Var\of{\empkl nkP}\leq 8k/n^2$, and more generally, for
  all integers $q\geq 1$ we have
  \[
    \E{\of{\centered{\empkl nkP}}^{2q}}\leq
    \frac{2^{6q}\of{k^qq! + (2q)!}}{n^{2q}}
    \,,
  \]
  so that in particular for all real $q\geq 1$ we have that
  \[
    \sqrt[q]{\E{\abs{\centered{\empkl nkP}}^{q}}}\leq
    \frac{24}{n}\of{\sqrt{kq} + q}\,.
  \]
\end{corollary}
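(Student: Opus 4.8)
The plan is to extract the variance bound, the even-moment bounds, and the $L^q$ norm bounds purely from \cref{thm:mgf}, treating $\centered{\empkl nkP}$ as a subgamma random variable whose cumulant generating function is dominated by that of a $\Gamma(2k, n/2)$ distribution. Write $W = \centered{\empkl nkP}$ and $\psi(t) = \log\E{\exp(tW)}$, so that $\psi(t) \leq \frac{4kt^2/n^2}{1-2t/n}$ for $0 \leq t < n/2$ (the first branch of the minimum in \cref{thm:mgf}), and symmetrically $\psi(t)$ is controlled for $t < 0$ as well — though for moment bounds I will work with $\E{\abs W^q}$ via the two-sided tail from \cref{cor:tail}, which is cleaner than juggling both signs of $t$ in the MGF.

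First, the variance. Expanding $\psi(t) = \frac{\Var(W)}{2} t^2 + O(t^3)$ near $t=0$ and comparing with $\frac{4k}{n^2} t^2 (1 + 2t/n + \cdots) = \frac{4k}{n^2} t^2 + O(t^3)$ gives $\Var(W)/2 \leq 4k/n^2$, i.e. $\Var(W) \leq 8k/n^2$. (Formally, one divides $\psi(t)$ by $t^2$ and takes $t \to 0^+$; since $\E W = 0$ the expansion has no linear term, and $\psi''(0) = \Var(W)$.) Second, the even moments: here I would integrate the tail bound from \cref{cor:tail}. Since $\PR{W \geq \eps} \leq \exp(-\min\{n^2\eps^2/(24k), n\eps/8\})$ and $\PR{W \leq -\eps} \leq \exp(-n^2\eps^2/(16k)) \leq \exp(-n^2\eps^2/(24k))$ for $\eps \leq 2k/n$ (and the left tail vanishes beyond that), we get $\PR{\abs W \geq \eps} \leq 2\exp(-\min\{n^2\eps^2/(24k), n\eps/8\})$. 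Then $\E{W^{2q}} = \int_0^\infty 2q\eps^{2q-1}\PR{\abs W \geq \eps}\intd\eps$ splits as the sum of a Gaussian-type integral $\int_0^\infty 2q\eps^{2q-1} \cdot 2\exp(-n^2\eps^2/(24k))\intd\eps = 2(24k/n^2)^q \Gamma(q+1) = 2 \cdot 24^q k^q q!/n^{2q}$ and an exponential-type integral $\int_0^\infty 2q\eps^{2q-1} \cdot 2\exp(-n\eps/8)\intd\eps = 2(8/n)^{2q}\Gamma(2q+1) = 2 \cdot 8^{2q}(2q)!/n^{2q}$. Bounding $24^q \leq 64^q = 2^{6q}$ and $8^{2q} = 64^q = 2^{6q}$ and absorbing the factor $2$ gives $\E{W^{2q}} \leq \frac{2^{6q}(k^q q! + (2q)!)}{n^{2q}}$, as claimed. (A minor point: the left-tail contribution is genuinely bounded only for $\eps \leq 2k/n$, but there $\PR{W \leq -\eps} = 0$, so the same integral bound holds a fortiori; I should state this explicitly.)

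Finally, the $L^q$ norm bound for real $q \geq 1$. For even integer $2q'$, the previous bound gives $\E{\abs W^{2q'}}^{1/(2q')} \leq \frac{2^3}{n}(k^{q'}q'! + (2q')!)^{1/(2q')} \leq \frac{8}{n}((k^{q'}q'!)^{1/(2q')} + ((2q')!)^{1/(2q')})$ using subadditivity of $x \mapsto x^{1/(2q')}$ on the nonnegatives. Then $q'! \leq q'^{q'}$ gives $(k^{q'}q'!)^{1/(2q')} \leq \sqrt{kq'}$, and $(2q')! \leq (2q')^{2q'}$ gives $((2q')!)^{1/(2q')} \leq 2q'$; so $\E{\abs W^{2q'}}^{1/(2q')} \leq \frac{8}{n}(\sqrt{kq'} + 2q') \leq \frac{16}{n}(\sqrt{kq'} + q')$. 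For general real $q \geq 1$, apply monotonicity of $L^p$ norms: pick the even integer $2q'$ with $q \leq 2q' \leq q + 2 \leq 3q$ (valid since $q \geq 1$), so $\E{\abs W^q}^{1/q} \leq \E{\abs W^{2q'}}^{1/(2q')} \leq \frac{16}{n}(\sqrt{k \cdot 2q'} + 2q') \leq \frac{16}{n}(\sqrt{3kq} + 3q) \leq \frac{24}{n}(\sqrt{kq} + q)$, using $\sqrt 3 \leq 3/\sqrt 2$ — wait, I should just bound $\sqrt 3 < 2$ and $3 < \text{(adjust constant)}$; tracking this carefully, $16\sqrt 3 < 28$ and $16 \cdot 3 = 48$, so a constant like $48/n$ works directly, and the claimed $24$ is attainable with the sharper intermediate estimates $(2q')! \leq 4^{q'} (q'!)^2 \leq 4^{q'} q'^{2q'}$ giving $((2q')!)^{1/(2q')} \leq 2q'$ and more careful choice $2q' \leq q+1 \leq 2q$. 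I expect this last bookkeeping — choosing the even integer and chasing constants through the three estimates — to be the only fiddly part; there is no real obstacle, just care with the $\min$ in the tail bound and with which power-mean inequality is applied where.
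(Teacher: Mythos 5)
Your variance bound and your final $L^q$-norm argument are essentially the paper's own: the variance is read off from the second-order behaviour of the cumulant generating function bound at $0$, and the $L^q$ bound uses monotonicity of $L^p$ norms, the smallest even integer $2m\geq q$ (so $2m\leq q+2\leq 3q$), and the estimates $m!\leq m^m$ and $(2m)!\leq(2m)^{2m}$; your eventual constant chase lands on $24/n$ the same way the paper's does.

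The genuinely different step is the even-moment bound. The paper simply applies the standard sub-gamma moment theorem \cite[Theorem 2.3]{bou_lug_mas_concentration_2013} to the bound $\frac{4kt^2/n^2}{1-2t/n}=\frac{vt^2}{2(1-ct)}$ with $v=8k/n^2$ and $c=2/n$, which yields exactly $q!\,(8v)^q+(2q)!\,(4c)^{2q}=2^{6q}\of{k^qq!+(2q)!}/n^{2q}$. Your route of integrating the tail bounds of \cref{cor:tail} is sound in spirit but does not close as written: after bounding $\PR{\abs{\centered{\empkl nkP}}\geq\eps}\leq 2\exp\of{-\min\set{n^2\eps^2/(24k),\,n\eps/8}}$ and splitting the minimum into a sum, your exponential-type integral is $2\cdot 8^{2q}(2q)!/n^{2q}=2\cdot 2^{6q}(2q)!/n^{2q}$, and the leading factor $2$ cannot be ``absorbed'' into $2^{6q}$ (that would require $2\cdot 64^q\leq 64^q$); nor does the slack in the Gaussian term ($2\cdot 24^q\leq 64^q$) cover the deficit in general (for $q=1$, $k=2$ the total is $352/n^2$ against the claimed $256/n^2$). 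The fix is to not union the two tails into a single two-sided bound: the left tail in \cref{cor:tail} is purely sub-Gaussian, contributing only $(16k/n^2)^q q!$ and no $(2q)!$ term at all, so only the right tail produces an exponential-type integral, and then $(16^q+24^q)k^qq!+64^q(2q)!\leq 64^q\of{k^qq!+(2q)!}$ does go through for $q\geq 1$. Either repair the tail computation this way, or read the moments off the MGF bound directly as the paper does.
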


The proof of \cref{thm:mgf} follows a similar outline as that of the earlier
work \cite{agrawal_finitesample_2020} for the non-centered moment generating
function, later extended to the centered version by \cite{bha_pen_sharp_2021},
namely it reduces the multinomial case to the simpler case $k=2$ of the binomial
and then bounding the binomial. Our point of departure is in the reduction used:
the aforementioned works used a reduction that takes advantage of the dependence
between the variables $X_i$ for $i\in\set{1,\dots,k}$ and as a result has bounds
in terms of $k-1$, but does not adapt as easily to the centered case (though it
can be done, as in \cite{bha_pen_sharp_2021}); by contrast we use a reduction
that shows we can consider independent $X_i$ by incurring a quadratic loss,
resulting in a simpler proof and stronger bound in the centered case (though
weaker in the non-centered case, both via the quadratic loss and by depending on
$k$ rather than $k-1$). It would be interesting to find an approach that worked
cleanly for both cases without incurring these losses.

\section{Proof}
\subsection{Reduction from the multinomial to binomial}

In this section, we reduce the case of an alphabet of size $k$ to that of an
alphabet of size $2$. To state the result, it is convenient to work with a
slightly modified formulation of the relative entropy, commonly used when
considering it as an $f$-divergence \cite{csiszar_informationstheoretische_1963,morimoto_markov_1963,ali_sil_general_1966}.

\begin{definition}
  Let $\phi:\R_{\geq 0} \to \R_{\geq 0}$ be the function on the non-negative
  reals given by $\phi(x) = x\log x - x + 1$ for $x > 0$ and
  $\phi(0) = 1$, and let $\phi_+(x) = \phi(x)\cdot \ind{x\geq 1}$ and
  $\phi_-(x) = \phi(x)\cdot \ind{x\leq 1}$.
\end{definition}
\begin{lemma}
  \label{lem:phiprops}
  $\phi$ is continuous, convex, non-negative, decreasing on $[0,1]$, increasing
  on $[1,\infty)$, and has $\phi(1)=0$. In particular, $\phi_+$ and $\phi_-$ are
  also continuous, convex, non-negative, are respectively non-decreasing and
  non-increasing, and satisfy $\phi = \phi_+ + \phi_-$.
\end{lemma}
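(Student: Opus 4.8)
The plan is to analyze $\phi$ directly by elementary calculus on the open half-line $(0,\infty)$, and then treat the two boundary phenomena separately: the behaviour at $x=0$, and the gluing of the truncated pieces $\phi_+$ and $\phi_-$ at $x=1$.

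First I would work on $(0,\infty)$, where $\phi(x) = x\log x - x + 1$ is differentiable with $\phi'(x) = \log x$ and $\phi''(x) = 1/x > 0$. Convexity of $\phi$ on $(0,\infty)$ is immediate from $\phi'' > 0$; the sign of $\phi'$ shows $\phi$ is (strictly) decreasing on $(0,1]$ and (strictly) increasing on $[1,\infty)$; and $\phi(1) = 1\cdot\log 1 - 1 + 1 = 0$. Hence $x=1$ is the global minimum of $\phi$ on $(0,\infty)$ with value $0$, which gives non-negativity there. To include $x=0$, I would use $\lim_{x\to 0^+} x\log x = 0$, so $\lim_{x\to 0^+}\phi(x) = 1 = \phi(0)$ and $\phi$ is continuous at $0$; since a function continuous on $[0,\infty)$ and convex on $(0,\infty)$ is convex on $[0,\infty)$, convexity extends, and monotonicity on $[0,1]$ and non-negativity ($\phi(0)=1\geq 0$) extend by continuity. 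This settles all the claimed properties of $\phi$.

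Next, for $\phi_+$ and $\phi_-$: because $\phi(1)=0$, the function $\phi_+$ equals the constant $0$ on $[0,1]$ and equals $\phi$ on $[1,\infty)$, with the two pieces agreeing (at the value $0$) at $x=1$, so $\phi_+$ is continuous; it is non-negative since both pieces are, and non-decreasing since it is constant on $[0,1]$ and then coincides with $\phi$, which is increasing past its minimum at $1$. For convexity I would invoke the standard fact that a function assembled from two convex pieces that agree in value at the join point is convex there provided the left derivative does not exceed the right derivative — here both one-sided derivatives at $x=1$ equal $\phi'(1)=\log 1 = 0$, so this holds. The argument for $\phi_-$ is symmetric (equal to $\phi$ on $[0,1]$, constant $0$ on $[1,\infty)$, non-increasing, same derivative matching at $x=1$). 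The identity $\phi = \phi_+ + \phi_-$ is then a case check on $x<1$, $x=1$, and $x>1$; the only subtlety is at $x=1$, where both indicators are $1$, but since $\phi(1)=0$ there is no double-counting.

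There is no genuine obstacle here; the proof is routine. The only two points that require a moment of care are the limit at $x=0$ (to get continuity and thereby extend convexity and monotonicity to the closed half-line) and the matching of the one-sided derivatives at the join $x=1$ when establishing convexity of $\phi_+$ and $\phi_-$.
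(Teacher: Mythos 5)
Your proof is correct and is exactly the routine calculus verification ($\phi'(x)=\log x$, $\phi''(x)=1/x>0$, the limit $x\log x\to 0$ at the origin, and derivative matching at the join $x=1$) that the paper implicitly relies on by stating the lemma without proof. All the details check out, including the potential double-counting at $x=1$ in $\phi=\phi_++\phi_-$, which is harmless precisely because $\phi(1)=0$.
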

\begin{lemma}
  \label{lem:klphi}
  The relative entropy satisfies
  \[
    \KL\diver[\big]{\of{q_1,\dotsc,q_k}}{\of{p_1,\dotsc,p_k}}
    = \sum_{i=1}^k p_i\cdot \phi\of{\frac{q_i}{p_i}}
  \]
  where $0\cdot\phi(q_i/0)=\infty$ if $q_i > 0$.
\end{lemma}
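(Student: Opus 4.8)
The plan is a direct computation from the definitions of the relative entropy and of $\phi$. Fix an index $i$ with $p_i > 0$. If $q_i > 0$, unfolding $\phi(x) = x\log x - x + 1$ gives
\[
  p_i\cdot\phi\of{\frac{q_i}{p_i}}
  = p_i\of{\frac{q_i}{p_i}\log\frac{q_i}{p_i} - \frac{q_i}{p_i} + 1}
  = q_i\log\frac{q_i}{p_i} - q_i + p_i\,,
\]
while if $q_i = 0$ then $p_i\cdot\phi(0) = p_i$, which is exactly the value of the right-most expression above under the convention $0\log 0 = 0$ implicit in the definition of the relative entropy. Summing over all $i$ with $p_i > 0$, the contributions $-q_i$ and $+p_i$ collect into $\sum_{i : p_i > 0}(p_i - q_i)$, so that
\[
  \sum_{i : p_i > 0} p_i\cdot\phi\of{\frac{q_i}{p_i}}
  = \sum_{i : p_i > 0} q_i\log\frac{q_i}{p_i} + \sum_{i : p_i > 0}(p_i - q_i)\,.
\]

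Next I would handle the indices with $p_i = 0$. If $q_i > 0$ for some such $i$, both sides of the claimed identity equal $+\infty$: the left-hand side because $q_i\log(q_i/p_i)$ is $+\infty$ by the usual convention, and the right-hand side because $0\cdot\phi(q_i/0) = +\infty$ by the convention stated in the lemma; so the identity holds trivially. Otherwise $q_i = 0$ whenever $p_i = 0$, and such terms contribute nothing to either side — to the relative entropy by $0\log 0 = 0$, and to the right-hand sum because it ranges only over the support of $P$ — so they may be dropped. Once they are, $P$ and $Q$ are both supported on $\{i : p_i > 0\}$, hence $\sum_{i : p_i > 0} p_i = \sum_{i : p_i > 0} q_i = 1$ and the term $\sum_{i : p_i > 0}(p_i - q_i)$ vanishes. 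Plugging this into the second display and recognizing the remaining sum $\sum_{i : p_i > 0} q_i\log(q_i/p_i)$ as the relative entropy of $Q$ with respect to $P$ completes the proof.

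Since the statement is an elementary algebraic identity, there is no substantive obstacle; the only thing that requires a little care is tracking the boundary conventions ($0\log 0 = 0$, $0\cdot\infty = \infty$, $\phi(0) = 1$) consistently on the two sides, which is precisely what the case analysis on whether $p_i$ or $q_i$ vanishes accomplishes.
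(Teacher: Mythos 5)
Your computation is correct and complete; the paper states \cref{lem:klphi} without proof precisely because it is this elementary identity, and your case analysis (expanding $p_i\phi(q_i/p_i) = q_i\log(q_i/p_i) - q_i + p_i$, noting the correction term $\sum_i(p_i-q_i)$ vanishes for probability vectors, and checking the boundary conventions when $p_i$ or $q_i$ is zero) is exactly the verification being implicitly relied on.
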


With this definition, we can state the main result of this section:
\begin{proposition}\label{prop:multired}
  For all $n$, $k$, $P=(p_1,\dotsc, p_k)$, and $t\in \R$, it holds that
  \begin{multline*}
    \E{\exp\of{t\cdot \of{\centered{\empkl nkP}}}}
    \\
    \leq
    \prod_{i=1}^k\sqrt{
      \E{\exp\of{2t\cdot\of{\centered{p_i\phi_+\of{X_i/np_i}}}}}
    }
    \\
    \cdot
    \prod_{i=1}^k\sqrt{
      \E{\exp\of{2t\cdot\of{\centered{p_i\phi_-\of{X_i/np_i}}}}}
    }
  \end{multline*}
  where $(X_1,\dots,X_k)$ is multinomially distributed with $n$ samples
  and probabilities $P$.
\end{proposition}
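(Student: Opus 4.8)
The plan is to express the centered empirical relative entropy as a sum over coordinates and then apply a combination of Hölder's inequality (to split the product over $i$) and a conditioning/martingale argument (to handle the dependence among the $X_i$). By \cref{lem:klphi} we may write
\[
  \empkl nkP = \sum_{i=1}^k p_i\phi\of{X_i/np_i} = \sum_{i=1}^k p_i\phi_+\of{X_i/np_i} + \sum_{i=1}^k p_i\phi_-\of{X_i/np_i},
\]
using $\phi = \phi_+ + \phi_-$ from \cref{lem:phiprops}. Centering is linear, so $\centered{\empkl nkP}$ is the sum of the $2k$ centered terms $\centered{p_i\phi_\pm(X_i/np_i)}$. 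The first step is therefore to apply the Cauchy--Schwarz inequality to split the "$+$" part from the "$-$" part:
\[
  \E{\exp\of{t\cdot\centered{\empkl nkP}}} \leq \sqrt{\E{\exp\of{2t\sum_i \centered{p_i\phi_+(X_i/np_i)}}}}\cdot\sqrt{\E{\exp\of{2t\sum_i \centered{p_i\phi_-(X_i/np_i)}}}}.
\]

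The harder and more interesting step is to bound each of these two factors by the corresponding product $\prod_i \sqrt{\E{\exp(2t\cdot\centered{p_i\phi_\pm(X_i/np_i)})}}$ — but note that taking an expectation of a product of dependent variables and wanting it to factor into a product of expectations is \emph{not} generally true, so a naive Hölder step in the wrong direction fails. The key observation is monotonicity: by \cref{lem:phiprops}, $\phi_+$ is nondecreasing and $\phi_-$ is nonincreasing, and the multinomial coordinates $(X_1,\dots,X_k)$ are \emph{negatively associated}. Consequently, for the "$+$" sum, the random variables $\set{\phi_+(X_i/np_i)}_i$ (being nondecreasing functions of negatively associated variables applied to disjoint coordinates) are themselves negatively associated, and likewise for the "$-$" sum the variables $\set{\phi_-(X_i/np_i)}_i$ are negatively associated (nonincreasing functions of NA variables). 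For either sign of $t$, $x\mapsto \exp(2t\cdot\centered{p_i\phi_+(x/np_i)})$ is a monotone function of $\phi_+(X_i/np_i)$ — all increasing or all decreasing in a consistent direction once the sign of $t$ is fixed — so by the defining property of negative association the expectation of the product is at most the product of expectations:
\[
  \E{\exp\of{2t\sum_i \centered{p_i\phi_+(X_i/np_i)}}} \leq \prod_{i=1}^k \E{\exp\of{2t\cdot\centered{p_i\phi_+(X_i/np_i)}}},
\]
and identically for the "$-$" version. Combining this with the Cauchy--Schwarz step above yields exactly the claimed bound.

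The main obstacle I anticipate is justifying the negative association machinery cleanly: one needs (i) that multinomial coordinates are negatively associated (this is classical, due to Joag-Dev and Proschan), (ii) that applying monotone functions to disjoint subsets of NA variables preserves NA, and (iii) that for NA variables $Y_1,\dots,Y_k$ and monotone-in-the-same-direction functions $f_i$ one has $\E{\prod_i f_i(Y_i)} \le \prod_i \E{f_i(Y_i)}$ — which requires the $f_i$ to be all nondecreasing or all nonincreasing, and this is precisely why the split into $\phi_+$ (nondecreasing) and $\phi_-$ (nonincreasing) via Cauchy--Schwarz is essential rather than cosmetic. An alternative to invoking NA directly would be a direct conditioning argument: reveal $X_1,\dots,X_k$ one at a time, and at each step use that conditioning a multinomial on the values of some coordinates leaves the remaining coordinates multinomial on the residual sample size and renormalized probabilities, together with a one-variable stochastic-domination estimate showing each conditional MGF factor is dominated by its unconditional counterpart. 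I would present the NA route as the clean argument, and this is where I'd focus the write-up.
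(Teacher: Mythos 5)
Your proposal is correct and follows essentially the same route as the paper: decompose $\phi=\phi_++\phi_-$, apply Cauchy--Schwarz to separate the nondecreasing part from the nonincreasing part, and then use negative association of the multinomial coordinates (\cref{lem:namultinom}) together with the monotonicity-preservation and product-factorization properties (\cref{lem:naprops}) to break the dependence within each factor. The paper likewise notes that for fixed sign of $t$ the functions $x\mapsto\exp\of{2t\of{p_i\phi_\pm(x/np_i)-\E{\phi_\pm(X_i/np_i)}}}$ are all monotone in a consistent direction and nonnegative, which is exactly the justification you give.
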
 

Note that the expectations in \cref{prop:multired} involve only a single $X_i$
at a time, i.e.~we have broken their dependence. To do so, we use the fact
that the variables are \emph{negatively associated} in the sense of
\textcite{joa_pro_negative_1983}.

\begin{definition}[{\cite[Definition 2.1]{joa_pro_negative_1983}}]
  \label{def:na}
  A collection of real-valued random variables $(Z_1, \dots, Z_m)$ is said to be
  \emph{negatively associated} if for all disjoint subsets $A_1,A_2
  \subseteq\set{1,\dots,m}$ and (pointwise) non-decreasing functions
  $f_i:\R^{\abs{A_i}}\to\R$, it holds that $\Cov\of{f_1(X_i, i\in A_1),
    f_2(X_j, j\in A_2)}\leq 0$.
\end{definition}
\begin{lemma}[{\cite[Properties $P_2$ and $P_6$]{joa_pro_negative_1983}}]
  \label{lem:naprops}
  If $(Z_1, \dots, Z_m)$ are negatively associated random variables, then
  for all functions $f_1,\dots,f_m:\R\to\R$ which are either all
  non-increasing or non-decreasing, the random variables
  $\of{f_1(Z_1), \dots, f_m(Z_m)}$ are negatively associated.
  In particular, if each $f_i(Z_i)\geq 0$ almost surely, then
  $\E{f_1(Z_1)\cdots f_m(Z_m)}\leq \E{f_1(Z_1)}\cdots \E{f_m(Z_m)}$.
\end{lemma}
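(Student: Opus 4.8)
The plan is to establish the two assertions of the lemma in turn, following \textcite{joa_pro_negative_1983}; each is a short consequence of the definition of negative association (\cref{def:na}). For the first assertion — that $\of{f_1(Z_1),\dots,f_m(Z_m)}$ is negatively associated — I would fix disjoint $A_1,A_2\subseteq\set{1,\dots,m}$ and coordinatewise non-decreasing $g_\ell:\R^{\abs{A_\ell}}\to\R$ for $\ell\in\set{1,2}$, and show $\Cov\of{g_1\of{f_i(Z_i),\,i\in A_1},\,g_2\of{f_j(Z_j),\,j\in A_2}}\leq 0$. When every $f_i$ is non-decreasing, the map $(x_i,\,i\in A_\ell)\mapsto g_\ell\of{f_i(x_i),\,i\in A_\ell}$ is a composition of coordinatewise non-decreasing maps, hence again coordinatewise non-decreasing, so the inequality is exactly an instance of \cref{def:na} applied to $(Z_1,\dots,Z_m)$. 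When instead every $f_i$ is non-increasing, that same composition $h_\ell := g_\ell\of{f_i(\cdot),\,i\in A_\ell}$ is coordinatewise non-increasing, so $-h_\ell$ is coordinatewise non-decreasing; applying \cref{def:na} to $-h_1$ and $-h_2$ gives $\Cov(-h_1,-h_2)\leq 0$, i.e.\ $\Cov(h_1,h_2)\leq 0$. Either way the claim follows — this is just the standard closure of negative association under monotone functions of disjoint blocks, here applied with singleton blocks.

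For the final inequality — the bound $\E{f_1(Z_1)\cdots f_m(Z_m)}\leq\E{f_1(Z_1)}\cdots\E{f_m(Z_m)}$ — I would set $W_i := f_i(Z_i)$, which is $\geq 0$ a.s.\ and, by the first part, negatively associated, and induct on $m$; the case $m=1$ is trivial. For the inductive step I would split $W_1\cdots W_m = (W_1\cdots W_{m-1})\cdot W_m$ and apply \cref{def:na} to $(W_1,\dots,W_m)$ with $A_1=\set{1,\dots,m-1}$, $A_2=\set{m}$, and the functions $(w_1,\dots,w_{m-1})\mapsto\prod_{i<m}\max(w_i,0)$ and $w_m\mapsto\max(w_m,0)$, which are coordinatewise non-decreasing on all of $\R^{m-1}$ and $\R$ respectively and agree a.s.\ with the corresponding products since the $W_i$ are nonnegative; this yields $\E{W_1\cdots W_m}\leq\E{W_1\cdots W_{m-1}}\cdot\E{W_m}$. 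Since the sub-collection $(W_1,\dots,W_{m-1})$ is itself negatively associated — its defining covariance inequalities are special cases of those for $(W_1,\dots,W_m)$ — the inductive hypothesis gives $\E{W_1\cdots W_{m-1}}\leq\E{W_1}\cdots\E{W_{m-1}}$, and combining the two bounds closes the induction.

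There is essentially no difficulty to overcome here; the only points needing a little care are that the product map is monotone only on the nonnegative orthant — which is what forces the truncation at $0$ in the inductive step, harmless because the $W_i$ are a.s.\ nonnegative — and that the covariances must be finite, which is automatic in our application since each $X_i\in\set{0,\dots,n}$ makes every $W_i$ of interest bounded, so no limiting argument is needed (in full generality one would truncate the $W_i$ at a level $M$ and let $M\to\infty$ by monotone convergence).
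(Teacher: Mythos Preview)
Your argument is correct and is essentially the standard proof of these closure properties from \textcite{joa_pro_negative_1983}. Note that the paper itself does not give a proof of this lemma at all --- it simply cites Properties $P_2$ and $P_6$ of that reference --- so there is nothing in the paper to compare against; your write-up fills in exactly the argument one would find there, including the appropriate care with the nonnegativity needed to make the product map monotone.
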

\begin{lemma}[{\cite[3.1(a)]{joa_pro_negative_1983}}]
  \label{lem:namultinom}
  For all positive integers $n$, $k$ and probabilities $P=(p_1,\dots,p_k)$,
  the random variables $(X_1,\dots,X_k)$ distributed multinomially with
  $n$ samples and probabilities $P$ are negatively associated.
\end{lemma}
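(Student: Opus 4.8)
The plan is to prove the statement first for a single multinomial trial ($n=1$), where it is essentially immediate, and then bootstrap to general $n$ using that negative association is stable under forming independent copies.

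For the single-trial case I would let $(Y_1,\dots,Y_k)$ record one draw from $P$, so that exactly one coordinate equals $1$ and the rest equal $0$, with $\PR{Y_i=1}=p_i$. To verify \cref{def:na}, fix disjoint $A_1,A_2\subseteq\set{1,\dots,k}$ and non-decreasing $f_1:\R^{\abs{A_1}}\to\R$ and $f_2:\R^{\abs{A_2}}\to\R$. Subtracting constants alters neither monotonicity nor the covariance, so one may assume $f_1(\mathbf{0})=f_2(\mathbf{0})=0$, whence $f_1,f_2\geq 0$ on the nonnegative orthant and hence on the support of $(Y_1,\dots,Y_k)$. The key observation is that the product $f_1(Y_i:i\in A_1)\cdot f_2(Y_j:j\in A_2)$ vanishes identically: a nonzero value would force some $Y_i=1$ with $i\in A_1$ and some $Y_j=1$ with $j\in A_2$, which is impossible since only one coordinate is nonzero and $A_1\cap A_2=\emptyset$. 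Thus $\E{f_1(Y_i:i\in A_1)\cdot f_2(Y_j:j\in A_2)}=0\leq\E{f_1(Y_i:i\in A_1)}\cdot\E{f_2(Y_j:j\in A_2)}$, the inequality using $f_1,f_2\geq 0$, which is exactly negative association.

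To pass to $n$ trials I would write $X_i=\sum_{j=1}^n Y_i^{(j)}$, where $(Y_1^{(j)},\dots,Y_k^{(j)})$ for $j=1,\dots,n$ are independent copies of the single-trial vector above, reproducing the multinomial law. Each of these $n$ vectors is negatively associated by the previous step, and since negative association is preserved under unions of independent families (property $P_7$ of \textcite{joa_pro_negative_1983}, in the spirit of \cref{lem:naprops}), the combined collection $\set{Y_i^{(j)}}$ of $nk$ random variables is negatively associated. Then, given disjoint $A_1,A_2\subseteq\set{1,\dots,k}$ and non-decreasing $f_1,f_2$, the random variable $f_1(X_i:i\in A_1)$ is a non-decreasing function of $\set{Y_i^{(j)}: i\in A_1,\ 1\leq j\leq n}$ (each $X_i$ being coordinatewise monotone in the $Y_i^{(j)}$), and likewise $f_2(X_j:j\in A_2)$ is a non-decreasing function of the disjoint block $\set{Y_i^{(j)}: i\in A_2,\ 1\leq j\leq n}$, so \cref{def:na} applied to the negatively associated collection $\set{Y_i^{(j)}}$ gives $\Cov\of{f_1(X_i:i\in A_1),\,f_2(X_j:j\in A_2)}\leq 0$, i.e.\ $(X_1,\dots,X_k)$ is negatively associated.

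The only ingredient not already in the excerpt is the closure of negative association under independent unions, and this is the single point that needs care. If one prefers not to cite it, it admits a short self-contained proof: conditioning on one family turns the covariance into that of two non-decreasing functions of disjoint blocks of the other family, which is $\leq 0$ by that family's negative association; averaging over the first family, and noting that the resulting conditional expectations are themselves non-decreasing functions of disjoint blocks of the first family, lets one invoke negative association once more. Everything else is elementary — in particular, the single-trial case is driven entirely by the fact that at most one coordinate is nonzero.
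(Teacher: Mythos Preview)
The paper does not give its own proof of this lemma; it is simply quoted from \cite[3.1(a)]{joa_pro_negative_1983} and used as a black box. Your argument is correct and is essentially the standard route to this fact: verify negative association for a single trial via the pigeonhole observation that disjoint coordinate blocks cannot both see the unique nonzero entry, then pass to $n$ trials using closure of negative association under independent unions (property $P_7$ of \cite{joa_pro_negative_1983}) together with the fact that each $X_i$ is a coordinatewise non-decreasing function of the single-trial indicators.
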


We would like to apply \cref{lem:naprops} to the KL divergence, but cannot do
so directly since the function $\phi$ is not monotone; however, since $\phi$
can be written as the sum of the monotone functions $\phi_+$ and $\phi_-$,
we can apply it after first separating the two parts by Cauchy--Schwarz,
incurring a quadratic penalty.

\begin{proof}[{Proof of \cref{prop:multired}}]
  Fix $n$, $k$, $P=(p_1,\dotsc,p_k)$, and $t\in\R$. Then
  by \cref{lem:klphi} and linearity of expectation we have that
  \[\centered{\empkl nkP} = \sum_{i=1}^k\centered{p_i\phi_+(X_i/np_i)}
    + \sum_{i=1}^k \centered{p_i\phi_-(X_i/np_i)}\,,\]
  and so by Cauchy--Schwarz we have
  \begin{multline*}
    \E{\exp\of{t\cdot \of{\centered{\empkl nkP}}}}
    \\
    \leq
    \sqrt{
      \E{\prod_{i=1}^k\exp\of{2t\cdot\of{\centered{p_i\phi_+\of{X_i/np_i}}}}}
    }\\\cdot\sqrt{
      \E{\prod_{i=1}^k\exp\of{2t\cdot\of{\centered{p_i\phi_-\of{X_i/np_i}}}}}
    }
    \,.
  \end{multline*}
  Now, since $\phi_+$ and $\phi_-$ are monotone (\cref{lem:phiprops}),
  we have that the functions
  \begin{align*}
    f_i(x)&=\exp\of{2t\of{p_i\phi_+(x/np_i) - \E{\phi_+(X_i/np_i)}}}\\
    g_i(x)&=\exp\of{2t\of{p_i\phi_-(x/np_i) - \E{\phi_-(X_i/np_i)}}}
  \end{align*}
  are for each $i$ respectively non-decreasing and non-increasing
  if $t\geq 0$ and respectively non-increasing and non-decreasing
  if $t\leq 0$. Thus, since the exponential function is non-negative,
  the result follows \cref{lem:naprops,lem:namultinom}.
\end{proof} 

\subsection{Bounding the binomial}

It remains to bound the centered moment generating function of the random
variables $np\cdot \phi_*(X/np)$ for $X$ binomially distributed with $n$ trials
and success probability $p$, where $\phi_*\in\set{\phi_+,\phi_-}$. Such bounds
can be derived using standard results on subgamma random variables (as done in
\cite{bha_pen_sharp_2021} following \cite[\S2.4]{bou_lug_mas_concentration_2013}), but
we do so explicitly here both for completeness and to derive (less-standard)
bounds in terms of the moment generating function of the gamma distribution
itself for comparison to \cref{conj:mgf}.

To begin, we use the standard fact that these
(non-centered) random variables satisfy strong tail bounds, via e.g.\ the
classical Hoeffding inequality:

\begin{lemma}
  \label{lem:halfkl-domexpo}
  If $X$ is binomially distributed with $n$ trials of success probability $p$,
  then the random variables
  \begin{align*}
    Z_+ &= n\KL\diver{\of{\frac Xn, 1-\frac Xn}}{\of{p,1-p}}\cdot \ind{X\geq np}\\
    Z_- &= n\KL\diver{\of{\frac Xn, 1-\frac Xn}}{\of{p,1-p}}\cdot \ind{X\leq np}
  \end{align*}
  are both stochastically dominated by the exponential distribution, that is,
  $\PR{Z_i \geq x}\leq \exp(-x)$ for all $x\geq 0$ and $i\in\set{+,-}$.
\end{lemma}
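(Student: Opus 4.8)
The plan is to show that the non-negative random variable $Z_i$ (for $i \in \{+,-\}$) arising from the half-KL of a binomial is stochastically dominated by the standard exponential distribution, which by the standard equivalence amounts to proving $\PR{Z_i \geq x} \leq e^{-x}$ for all $x \geq 0$. First I would invoke the classical result that the empirical KL divergence of a binomial is subexponential in the one-sided sense: for $X \sim \mathrm{Bin}(n,p)$, Hoeffding's inequality in its information-theoretic (Chernoff) form states that $\PR{X/n \geq q} \leq \exp\of{-n\KL\diver{(q,1-q)}{(p,1-p)}}$ for $q \geq p$, and symmetrically $\PR{X/n \leq q} \leq \exp\of{-n\KL\diver{(q,1-q)}{(p,1-p)}}$ for $q \leq p$. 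These are exactly the bounds one needs: on the event $\{X \geq np\}$ the function $q \mapsto \KL\diver{(q,1-q)}{(p,1-p)}$ is non-decreasing in $q$ over $[p,1]$, so $\{Z_+ \geq x\}$ is (up to the boundary case where $Z_+ = 0$) the event that $X/n$ is at least the unique $q_x \geq p$ with $n\KL\diver{(q_x,1-q_x)}{(p,1-p)} = x$, and the Chernoff bound evaluated at that threshold gives $\PR{Z_+ \geq x} \leq e^{-x}$. The argument for $Z_-$ is identical using the lower-tail Chernoff bound and monotonicity of the KL on $[0,p]$.

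The key steps, in order: (1) restate the Chernoff/Hoeffding upper-tail and lower-tail bounds for the binomial in the KL form (this is textbook, e.g.\ via the method of types for $k=2$ or a direct Markov-on-the-mgf computation); (2) observe that $q \mapsto \KL\diver{(q,1-q)}{(p,1-p)}$ is continuous, equals $0$ at $q=p$, is strictly increasing on $[p,1)$ and strictly decreasing on $(0,p]$, and tends to $-\log p$ (resp.\ $-\log(1-p)$) at the endpoints — so for each $x \geq 0$ there is a well-defined threshold $q_x^+ \in [p,1]$ (taking $q_x^+ = 1$ if $x$ exceeds the maximum attained value, in which case the event is empty and the bound is trivial) with the property that $Z_+ \geq x$ iff $X/n \geq q_x^+$; (3) combine: $\PR{Z_+ \geq x} = \PR{X/n \geq q_x^+} \leq \exp\of{-n\KL\diver{(q_x^+,1-q_x^+)}{(p,1-p)}} = e^{-x}$; (4) repeat verbatim for $Z_-$ with the lower tail. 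A minor point to handle carefully is the boundary/degenerate cases $p \in \{0,1\}$ (where one of $Z_+, Z_-$ is identically $0$ and the claim is vacuous) and $x=0$ (where $e^{-x}=1 \geq \PR{\cdot}$ trivially), plus the fact that $X/n$ is discrete so the event $\{X/n \geq q_x^+\}$ is really $\{X \geq \lceil nq_x^+\rceil\}$, but monotonicity of the KL means evaluating the Chernoff bound at the actual atom $\lceil nq_x^+ \rceil / n \geq q_x^+$ only helps.

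I do not expect a serious obstacle here — the statement is essentially a repackaging of the standard Chernoff bound for the binomial together with the elementary monotonicity of the binary KL divergence, and the only care needed is bookkeeping around the discreteness of $X$ and the endpoints of the parameter range. If anything, the mildly delicate part is articulating precisely why $\{Z_+ \geq x\} = \{X/n \geq q_x^+\}$ as events (as opposed to just an inclusion), which follows because on $\{X \geq np\}$ the map $X/n \mapsto Z_+/n$ is the monotone function $q \mapsto \KL\diver{(q,1-q)}{(p,1-p)}$ restricted to $[p,1]$ and hence invertible there; but even if one only establishes the inclusion $\{Z_+ \geq x\} \subseteq \{X/n \geq q_x^+\}$, that inclusion alone suffices for the probability bound. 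One could alternatively package the whole thing abstractly: any non-negative random variable $W$ with $\PR{W \geq x} \leq e^{-\psi(x)}$ for a continuous increasing $\psi$ with $\psi(0)=0$ has $\psi(W)$ stochastically dominated by the exponential; applying this with $W = X/n$ restricted appropriately and $\psi = n\KL\diver{(\cdot,1-\cdot)}{(p,1-p)}$ gives the result in one line, and I would likely present it this way for brevity.
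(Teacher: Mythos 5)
Your proposal is correct and follows essentially the same route as the paper: invoke Hoeffding's inequality in its KL (Chernoff) form for each tail, use the monotonicity of $q\mapsto\KL\diver{(q,1-q)}{(p,1-p)}$ on $[p,1]$ and $[0,p]$ to invert the map, and read off $\PR{Z_\pm\geq x}\leq e^{-x}$. Your extra bookkeeping about discreteness, endpoints, and the degenerate cases is more careful than the paper's one-sentence inversion argument but does not change the substance.
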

\begin{proof}
  By Hoeffding's inequality \cite{hoeffding_probability_1963} we have that for
  any real $k\geq np$ (resp.\ $k\leq np$) it holds that $\PR{X\geq k}$ (resp.\
  $\PR{X\leq k}$) is at most $\exp\of{-n\cdot f(k)}$ for $f(k) = \KL\diver{\of{\frac kn, 1-\frac
        kn}}{\of{p,1-p}}$,
    so since $f(k)$ is increasing in $k$ for $k\geq np$ and decreasing in $k$ for $k\leq np$, we get
    by inverting $f$ that
  $\PR{n\cdot f(X)\cdot \ind{X\geq np}
    \leq \eps}\leq \exp\of{-n\eps/n} = \exp\of{-\eps}$
  as desired, and analogously for the other tail.
\end{proof} 
\begin{corollary}
  \label{cor:redqty-domexpo}
  If $X$ is binomially distributed with $n$ trials of success probability $p$,
  then $np\cdot \phi_+(X/np)$ and $np\cdot\phi_-(X/np)$ are both stochastically
  dominated by an exponential random variable.
\end{corollary}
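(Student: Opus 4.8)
The plan is to deduce \cref{cor:redqty-domexpo} from \cref{lem:halfkl-domexpo} by showing that $np\cdot\phi_+(X/np)$ and $np\cdot\phi_-(X/np)$ are bounded \emph{pointwise} (i.e.\ for every realization of $X$) by the variables $Z_+$ and $Z_-$ of that lemma; since stochastic domination is inherited under a pointwise inequality, this immediately gives the corollary. I would assume $0<p<1$ throughout: if $p\in\set{0,1}$ then $X$ is deterministic and $np\cdot\phi_\pm(X/np)=0$, so the claim is trivial.

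The key deterministic fact is that for every real $x\in[0,n]$,
\[
  np\cdot\phi\of{\frac{x}{np}}\;\leq\; n\KL\diver{\of{\frac xn,1-\frac xn}}{\of{p,1-p}}\,.
\]
This is immediate from \cref{lem:klphi} with $k=2$, which expands the right-hand side as $np\cdot\phi(x/np)+n(1-p)\cdot\phi\of{\frac{1-x/n}{1-p}}$, together with the non-negativity of $\phi$ from \cref{lem:phiprops} that makes the second summand non-negative. Now, since $np>0$, the event $\set{x/np\geq 1}$ coincides with $\set{x\geq np}$, so by definition $np\cdot\phi_+(x/np)=np\cdot\phi(x/np)\cdot\ind{x\geq np}$; multiplying the displayed inequality by $\ind{x\geq np}$ yields
\[
  np\cdot\phi_+\of{\frac{x}{np}}\;\leq\; n\KL\diver{\of{\frac xn,1-\frac xn}}{\of{p,1-p}}\cdot\ind{x\geq np}\,,
\]
and the right-hand side is exactly the value taken by $Z_+$ on the event $\set{X=x}$. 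Hence $np\cdot\phi_+(X/np)\leq Z_+$ surely, and the symmetric computation with $\ind{x\leq np}$ in place of $\ind{x\geq np}$ gives $np\cdot\phi_-(X/np)\leq Z_-$ surely. Combining with \cref{lem:halfkl-domexpo}, which says each of $Z_+,Z_-$ is stochastically dominated by an exponential random variable, finishes the proof.

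I do not expect a real obstacle here --- the entire content is the one-line inequality $np\cdot\phi(x/np)\leq n\KL\diver{\of{x/n,1-x/n}}{\of{p,1-p}}$ and the bookkeeping that the relevant indicator events agree. The one point to check is that this remains valid at the endpoints $x\in\set{0,n}$, where $\phi$ takes the value $\phi(0)=1$ rather than $0$; but \cref{lem:klphi} applies unchanged as long as $0<p<1$ (the degenerate convention $0\cdot\phi(\cdot/0)=\infty$ is never invoked), so no separate treatment is needed --- e.g.\ at $x=0$ the inequality reads $np\leq-n\log(1-p)$, which holds.
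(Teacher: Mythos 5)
Your proof is correct and is essentially the paper's own argument: the paper likewise expands $Z_\pm$ via the $k=2$ case of \cref{lem:klphi} as $np\cdot\phi_\pm(X/np)$ plus a non-negative term, deduces the pointwise domination $np\cdot\phi_\pm(X/np)\leq Z_\pm$, and invokes \cref{lem:halfkl-domexpo}. The only cosmetic difference is that the paper identifies the discarded summand explicitly as $n(1-p)\,\phi_\mp\of{\frac{n-X}{n(1-p)}}$ rather than just appealing to non-negativity of $\phi$.
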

\begin{proof}
  We have that
  \begin{align*}
    n\KL\diver{\of{\frac Xn, 1-\frac Xn}}{\of{p,1-p}}\cdot \ind{X\geq np}
    &=
    np\cdot\phi_+\of{\frac X{np}}
    +
      n(1-p)\cdot\phi_-\of{\frac{n-X}{n(1-p)}}\\
    &\geq np\cdot\phi_+\of{\frac X{np}} \geq 0\\
    n\KL\diver{\of{\frac Xn, 1-\frac Xn}}{\of{p,1-p}}\cdot \ind{X\leq np}
    &=
      np\cdot\phi_-\of{\frac X{np}}
      +
      n(1-p)\cdot\phi_+\of{\frac{n-X}{n(1-p)}}\\
    &\geq np\cdot\phi_-\of{\frac X{np}} \geq 0
  \end{align*}
  so that the result follows from \cref{lem:halfkl-domexpo}.
\end{proof}

Finally, we show that random variables satisfying such tail bounds
have their centered moment generating function bounded by that of a
gamma distribution.
\begin{lemma}
  \label{lem:nonneg-cgf-rep}
  Let $Z$ be a non-negative random variable. Then for all $t\in\R$, we
  have that
  \[
    \log\E{\exp\of{t\of{\centered Z}}}
    = \log\of{1 + t\E Z + \int_0^\infty t\of{\exp\of{tx}-1}\PR{Z\geq x} \intd x}
    - t \E Z
    \,.
  \]
\end{lemma}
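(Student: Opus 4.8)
The plan is to peel off the deterministic part of the exponent and reduce to an elementary identity. Writing $\exp\of{t\of{\centered Z}} = \exp\of{-t\E Z}\cdot\exp\of{tZ}$ and taking logarithms gives $\log\E{\exp\of{t\of{\centered Z}}} = \log\E{\exp\of{tZ}} - t\E Z$, so it suffices to prove
\[
  \E{\exp\of{tZ}} = 1 + t\E Z + \int_0^\infty t\of{\exp\of{tx}-1}\PR{Z\geq x}\intd x\,.
\]

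First I would establish the simpler representation $\E{\exp\of{tZ}} = 1 + \int_0^\infty t\exp\of{tx}\PR{Z\geq x}\intd x$. By the fundamental theorem of calculus, $\exp\of{tZ} - 1 = \int_0^Z t\exp\of{tx}\intd x$ pointwise; rewriting the right-hand side as $\int_0^\infty t\exp\of{tx}\ind{x<Z}\intd x$ (here using $Z\geq 0$), taking expectations, and exchanging the order of integration gives $\E{\exp\of{tZ}} - 1 = \int_0^\infty t\exp\of{tx}\PR{Z>x}\intd x$. The exchange is justified by Tonelli's theorem when $t\geq 0$ (the integrand is non-negative), and by Fubini's theorem when $t<0$, since then the iterated integral of the absolute value is finite: $\int_0^\infty \abs t\exp\of{tx}\PR{Z>x}\intd x \leq \int_0^\infty \abs t\exp\of{tx}\intd x = 1$. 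Replacing $\PR{Z>x}$ by $\PR{Z\geq x}$ alters the integral only on the (at most countable, hence Lebesgue-null) set of atoms of $Z$.

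Finally I would rearrange into the stated form by writing $t\exp\of{tx} = t\of{\exp\of{tx}-1} + t$ inside the integral and splitting it, recognizing the resulting term $t\int_0^\infty\PR{Z\geq x}\intd x$ as $t\E Z$ via the layer-cake identity $\E Z = \int_0^\infty\PR{Z\geq x}\intd x$ for the non-negative variable $Z$. I do not expect a genuine obstacle here: the one point requiring care is the interchange of expectation and integral for $t<0$, handled by the uniform bound above (and when $t\geq 0$ both sides may equal $+\infty$ consistently), while the degenerate case $\E Z=\infty$ does not arise in the sequel, where this lemma is applied only to variables stochastically dominated by an exponential (\cref{cor:redqty-domexpo}), so one may simply take $\E Z<\infty$.
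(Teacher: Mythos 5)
Your proof is correct and takes essentially the same route as the paper's: the layer-cake identity $\E Z = \int_0^\infty \PR{Z\geq x}\intd x$ together with the representation $\E{\exp(tZ)} = 1 + \int_0^\infty t\exp\of{tx}\PR{Z\geq x}\intd x$ (which the paper likewise obtains from non-negativity of the exponential or integration by parts), followed by the same rearrangement. You merely supply more detail, namely the Tonelli/Fubini justification for the interchange and the observation that the atoms of $Z$ form a Lebesgue-null set.
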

\begin{proof}
  Since $Z$ is non-negative, we have that $\E Z = \int_0^\infty \PR{Z \geq x}\intd
  x$, and by integration by parts (or non-negativity of the exponential) also that
  \[
    \E{\exp(tZ)} = 1 + \int_0^\infty t\exp\of{tx}\PR{Z\geq x}\intd x
  \]
  for all $t\in \R$.
\end{proof}

\begin{proposition}
  \label{prop:nonneg-cgf-expo-dom}
  Let $Z$ be a non-negative random variable stochastically dominated
  by the exponential distribution, i.e.~such that $\PR{Z\geq x}
  \leq \exp(-x)$ for all $x\geq 0$. Then for all $t\in(-\infty,1)$,
  it holds that
  \[
    \log\E{\exp\of{t\of{\centered Z}}} \leq B(t)
  \]
  where
  \[
    B(t) = \begin{cases}
      \frac{t^2}{1-t} &t\leq 0\\
      \max\set{
        \log\of{1+\frac{t^2}{1-t}-\frac{t^2}5},
        \log\of{1+\frac t5 + \frac{t^2}{1-t}}-\frac t5
      }
    &t\geq 0
  \end{cases}
  \]
  satisfies the upper bounds
  \begin{align*}
    B(t) &\leq
           \begin{cases}
             \frac{t^2}{1-t}&t\leq 0\\
             \log\of{1+\frac{t^2}{1-t}}&t\geq 0
           \end{cases} 
          \leq \frac{t^2}{1-t}
    &
      B(t)
    &\leq
      2\log\of{\frac{\exp(-t)}{1-t}}
  \end{align*}
  for all $t<1$.
\end{proposition}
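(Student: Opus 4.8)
The plan is to combine the representation in \cref{lem:nonneg-cgf-rep} with the elementary fact that $t\of{\exp(tx)-1}\geq 0$ for every real $t$ and $x\geq 0$. Write $\mu=\E Z$, so that $\mu=\int_0^\infty\PR{Z\geq x}\intd x\in[0,1]$ by the stochastic-domination hypothesis, and put $I=\int_0^\infty t\of{\exp(tx)-1}\PR{Z\geq x}\intd x$. Then \cref{lem:nonneg-cgf-rep} gives $\log\E{\exp\of{t\of{\centered Z}}}=\log\of{1+t\mu+I}-t\mu$, while nonnegativity of $t\of{\exp(tx)-1}$ together with $\PR{Z\geq x}\leq\exp(-x)$ gives
\[
  0\leq I\leq\int_0^\infty t\of{\exp(tx)-1}\exp(-x)\intd x=\frac{t^2}{1-t}.
\]
For $t\leq 0$ this already suffices: since $1+t\mu+I=\E{\exp(tZ)}>0$, the bound $\log(1+y)\leq y$ yields $\log\E{\exp\of{t\of{\centered Z}}}\leq(t\mu+I)-t\mu=I\leq\frac{t^2}{1-t}=B(t)$.

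For $t\geq 0$ I would interpolate, according to the size of $\mu$, between the crude bound $I\leq\frac{t^2}{1-t}$ (wasteful when $\mu$ is small) and the centering correction $-t\mu$ (negligible when $\mu$ is small), with cut-off $1/5$. If $\mu\geq 1/5$, use $I\leq\frac{t^2}{1-t}$ and observe that $\mu\mapsto\log\of{1+t\mu+\frac{t^2}{1-t}}-t\mu$ has derivative $\frac{t}{1+t\mu+t^2/(1-t)}-t\leq 0$ for $t\geq 0$, hence is non-increasing, so it is at most its value at $\mu=1/5$, which is the second expression in the definition of $B(t)$. If $\mu<1/5$, replace the crude bound by a sharper one: writing $t\of{\exp(tx)-1}=t^2\int_0^x\exp(ty)\intd y$ and exchanging the order of integration gives $I=t^2\int_0^\infty\exp(ty)\of{\int_y^\infty\PR{Z\geq x}\intd x}\intd y$; bounding $\int_y^\infty\PR{Z\geq x}\intd x\leq\min\set{\mu,\exp(-y)}$ and splitting the $y$-integral at $y=-\log\mu$ yields $t\mu+I\leq\frac{t\mu^{1-t}}{1-t}$, so that $\log\E{\exp\of{t\of{\centered Z}}}\leq\log\of{1+\frac{t\mu^{1-t}}{1-t}}-t\mu$. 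It then remains to check the purely analytic inequality
\[
  \of{1+\frac{t\mu^{1-t}}{1-t}}\exp(-t\mu)\leq 1+\frac{t^2}{1-t}-\frac{t^2}{5}
  \qquad(0\leq t<1,\ 0<\mu\leq 1/5),
\]
whose right-hand side is the first expression in $B(t)$; the factor $\exp(-t\mu)$ from the centering---expanded as $\exp(-t\mu)\leq 1-t\mu+\frac{t^2\mu^2}{2}$---is what makes the estimate work for small $t$, where $\mu^{1-t}$ is close to $\mu$ and hence not small relative to $t$. Carrying out this last inequality over the full range of $(t,\mu)$, and confirming that $1/5$ is an admissible cut-off, is the one genuinely computational step, and I expect it to be the main obstacle.

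Finally, the remaining bounds on $B$ are elementary. For $t\leq 0$ we have $B(t)=\frac{t^2}{1-t}$, so the first two bounds are immediate, while $B(t)\leq 2\log\frac{\exp(-t)}{1-t}$ amounts to $\frac{t^2}{1-t}\leq 2\of{-t-\log(1-t)}$, which holds since the difference vanishes at $t=0$ and, with $s=-t\geq 0$, has derivative $s^2/(1+s)^2\geq 0$. For $t\geq 0$, applying $\log(1+y)\leq y$ to the two expressions defining $B(t)$ gives $B(t)\leq\frac{t^2}{1-t}$, and using also $\exp(t/5)\geq 1+t/5$ gives $B(t)\leq\log\of{1+\frac{t^2}{1-t}}$; the bound $B(t)\leq 2\log\frac{\exp(-t)}{1-t}$ reduces, after exponentiating and clearing the factor $(1-t)^2$, to the inequalities $\of{5-5t+4t^2+t^3}(1-t)\leq 5\exp(-2t)$ and $\of{5-4t+4t^2}(1-t)\leq 5\exp(-9t/5)$ on $[0,1)$, each a routine calculus check.
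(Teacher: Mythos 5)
Your representation via \cref{lem:nonneg-cgf-rep}, the bound $I\leq\frac{t^2}{1-t}$, the $t\leq 0$ case, and the $\E Z\geq 1/5$ subcase all coincide with the paper's argument. The problem is the subcase $\E Z=\mu<1/5$ with $t\geq 0$, which is precisely where the $-t^2/5$ term in $B(t)$ has to come from. Your refined bound $t\mu+I\leq\frac{t\mu^{1-t}}{1-t}$ (via Fubini and splitting at $y=-\log\mu$) is correct, but the argument then terminates in the two-variable transcendental inequality $\of{1+\frac{t\mu^{1-t}}{1-t}}\exp(-t\mu)\leq 1+\frac{t^2}{1-t}-\frac{t^2}{5}$ over all $0\leq t<1$, $0<\mu\leq 1/5$, which you explicitly leave unproven and flag as ``the main obstacle.'' That is a genuine gap: this inequality \emph{is} the content of the proposition in this regime (numerically it appears to hold, e.g.\ the second-order-in-$t$ comparison requires $\mu(1-\log\mu)-\mu^2/2\leq 4/5$, which is satisfied with value about $0.50$ at $\mu=1/5$, but a proof over the full range is not a routine one-line check and you have not supplied one). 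The paper avoids this entirely with a different device: when $\mu\leq 1/5$, Markov's inequality gives $\PR{Z\geq x}\leq 1/(5x)$, which is smaller than $\exp(-x)$ on an interval containing $[3/10,5/2]$, so one may subtract $\int_{3/10}^{5/2}t\of{\exp(tx)-1}\of{\exp(-x)-1/(5x)}\intd x\geq t^2/5$ from the crude integral bound and then use monotonicity in $\mu$ alone; this yields $\log\of{1+\frac{t^2}{1-t}-\frac{t^2}{5}}$ directly with no two-variable inequality to verify. To complete your proof you would either need to actually establish your inequality (likely by a monotonicity analysis in $\mu$ for fixed $t$, locating the interior critical point of $\mu\mapsto\of{1+\frac{t\mu^{1-t}}{1-t}}e^{-t\mu}$) or switch to the Markov-inequality route.

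Two smaller remarks. First, your reduction of $B(t)\leq 2\log\frac{\exp(-t)}{1-t}$ for $t\geq 0$ to the polynomial-versus-exponential inequalities $\of{5-5t+4t^2+t^3}(1-t)\leq 5\exp(-2t)$ and $\of{5-4t+4t^2}(1-t)\leq 5\exp(-9t/5)$ is algebraically correct, and both inequalities are true on $[0,1)$, but calling them ``routine'' undersells them: the second has margin $\frac{1}{10}t^2-\frac{43}{50}t^3+O(t^4)$ near $0$ and stays within a few parts in $10^4$ of equality for $t$ up to about $0.25$, so the verification needs care (the paper's corresponding step, showing $2\log\frac{\exp(-t)}{1-t}-B(t)$ is non-decreasing on $[0,1)$, is equally delicate). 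Second, your verifications of the remaining bounds on $B$ (the $\log(1+y)\leq y$ and $\exp(t/5)\geq 1+t/5$ manipulations, and the derivative computation $\frac{s^2}{(1+s)^2}$ for $t\leq 0$) are correct and match the paper in substance.
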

\begin{remark}\label{rmrk:expo-const}
  By optimizing over the set of random variables stochastically dominated by the exponential,
  one can (with more work) establish an upper bound of the form $C
  \log\of{\frac{\exp(-t)}{1-t}}$ for an explicit constant $C < 2$, but since the
  result does not hold under the stated assumptions for $C = 1$, we do not attempt
  to optimize this constant beyond the minimal work we do here to give $C = 2$.
\end{remark}
\begin{proof}
  Note that $t\of{\exp(tx) - 1}\geq 0$ for all $x\geq 0$ and $t\in\R$, so that
  $t\of{\exp\of{tx}-1}\PR{Z\geq x}\leq t\of{\exp\of{tx}-1}\exp(-x)$, and thus by
  \cref{lem:nonneg-cgf-rep} we have for $t<1$ that
  \begin{align}
  \log\E{\exp\of{t\of{\centered Z}}}
  &\leq \log\of{1 + t\E Z + \int_0^\infty t\of{\exp\of{tx}-1}\exp(-x) \intd x}
  - t \E Z\nonumber\\
  &=\log\of{1 + t\E Z + \frac{t^2}{1-t}}
  - t \E Z\label{eqn:ez-upper-bound}
  \end{align}
  The upper bound for $t\leq 0$ follows from the fact that $\log(1+x)\leq x$ for
  all $x\in\R$. It remains to show the upper bound for $t\geq 0$, which we do
  in two cases based on $\E Z$.

  If $\E Z\geq 1/5$, then since \cref{eqn:ez-upper-bound} is decreasing in $\E
  Z$ (e.g.~by elementary calculus), we have that
  \[
    \log\E{\exp\of{t\of{\centered Z}}}
    \leq
    \log\of{1 + \frac t5 + \frac{t^2}{1-t}}
    - \frac t5
  \]
  as desired. On the other hand, if $\E Z\leq 1/5$, then by Markov's
  inequality we have for all $x\geq 0$ that $\PR{Z\geq x}\leq (\E Z)/x
  \leq 1/(5x)$, which is smaller than $\exp(-x)$ on an interval containing
  $[3/10,5/2]$. In particular, we can bound
  \begin{multline*}
    \int_0^\infty t\of{\exp\of{tx}-1}\PR{Z\geq x} \intd x
    \\
    \leq
    \int_0^\infty t\of{\exp\of{tx}-1}\exp(-x) \intd x
    -
    \int_{3/10}^{5/2} t\of{\exp\of{tx}-1}\of{\exp(-x) - 1/(5x)} \intd x 
    \,,
  \end{multline*} 
  where since $\exp(tx) - 1\geq tx$ we have
  \[
    \int_{3/10}^{5/2} t\of{\exp\of{tx}-1}\of{\exp(-x) - 1/(5x)} \intd x 
    \geq
    \int_{3/10}^{5/2} t^2 x\of{\exp(-x) - 1/(5x)} \intd x 
    \geq \frac{t^2}5
    \,.
  \]
  In particular, we get that
  \[
    \log\E{\exp\of{t\of{\centered Z}}}
    \leq
    \log\of{1 +t \E Z+ \frac{t^2}{1-t} - \frac{t^2}5} -t \E Z
    \leq
    \log\of{1 +\frac{t^2}{1-t} - \frac{t^2}5}
    \,
  \]
  where the second inequality is because the function is decreasing in
  $\E Z$.

  Finally, we prove the upper bounds on $B$. The first bound follows
  from the fact that $\log$ is an increasing function, $\log(1+x)\leq x$ for all
  $x$, and that $\log(C + x) - x$ is a decreasing function of $x\geq 0$ for
  $C\geq 1$. For the second bound, elementary calculus shows that
  \[
    2\log\of{\frac{\exp(-t)}{1-t}} - B(t)
  \]
  is non-increasing on the non-positive reals and non-decreasing on the
  non-negative reals, so that since it is $0$ at $0$ the bound follows.
\end{proof}

\subsection{Putting it together}
We can now prove the main results as stated in the introduction.
\begin{theorem}[\cref{thm:mgf} restated]
  For every $n$, $k$, and $P$, we have for all $t<n/2$ that
  \[
    \log\E{\exp\of{t\of{\centered{\empkl nkP}}}}
    \leq
    \min\set{
      \frac{4kt^2/n^2}{1-2t/n}
      ,\,
      2k \log\of{\frac{\exp\of{-2t/n}}{1-2t/n}}
    }
  \]
\end{theorem}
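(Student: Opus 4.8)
The plan is to assemble the three ingredients developed in the preceding subsections. First I would invoke \cref{prop:multired} to bound $\E{\exp\of{t\of{\centered{\empkl nkP}}}}$ by a product of $2k$ square roots of centered moment generating functions, each depending on only a single coordinate $X_i$ and evaluated at the doubled parameter $2t$. Since the marginal distribution of each coordinate of a multinomial with $n$ samples and probabilities $P$ is binomial with $n$ trials and success probability $p_i$, \cref{cor:redqty-domexpo} applies and tells us that each of $Z_i^{\pm} := np_i\phi_{\pm}\of{X_i/np_i}$ is a non-negative random variable stochastically dominated by the exponential distribution.

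Next, using that $p_i\phi_{\pm}\of{X_i/np_i} = Z_i^{\pm}/n$, for $t<n/2$ (equivalently $2t/n<1$) I would apply \cref{prop:nonneg-cgf-expo-dom} with the rescaled parameter $2t/n$ to each of the $2k$ factors, obtaining
\[
  \log\E{\exp\of{2t\of{\centered{p_i\phi_{\pm}\of{X_i/np_i}}}}}
  = \log\E{\exp\of{\frac{2t}{n}\of{\centered{Z_i^{\pm}}}}}
  \leq B\of{2t/n}\,.
\]
Taking logarithms of the bound from \cref{prop:multired}, each of the $2k$ square roots contributes a factor $1/2$, so summing over the $k$ indices $i$ and the two signs gives
\[
  \log\E{\exp\of{t\of{\centered{\empkl nkP}}}} \leq k\cdot B\of{2t/n}\,.
\]

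Finally I would substitute the two upper bounds on $B$ from \cref{prop:nonneg-cgf-expo-dom}: the bound $B(s)\leq s^2/(1-s)$ with $s=2t/n$ yields the first term $\frac{4kt^2/n^2}{1-2t/n}$, and $B(s)\leq 2\log\of{\frac{\exp(-s)}{1-s}}$ yields the second term $2k\log\of{\frac{\exp(-2t/n)}{1-2t/n}}$; taking the minimum gives the stated inequality. The gamma interpretation is then immediate, since the centered cumulant generating function of a gamma distribution with shape $\alpha$ and rate $\beta$ is $\alpha\log\of{\frac{\exp(-t/\beta)}{1-t/\beta}}$, which with $\alpha=2k$ and $\beta=n/2$ is exactly the second expression in the minimum. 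There is essentially no genuine obstacle in this final step — all the substantive work has been done in \cref{prop:multired}, \cref{cor:redqty-domexpo}, and \cref{prop:nonneg-cgf-expo-dom} — so the only point requiring care is the bookkeeping of the factors of two: the $1/2$ from the Cauchy--Schwarz step against the $2k$ binomial factors, and the rescaling $t\mapsto 2t/n$ against the validity range $t<n/2$ inherited from the condition $s<1$ in \cref{prop:nonneg-cgf-expo-dom}.
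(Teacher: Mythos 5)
Your proposal is correct and follows the paper's own proof exactly: combine \cref{prop:multired} with \cref{cor:redqty-domexpo}, apply \cref{prop:nonneg-cgf-expo-dom} at the rescaled parameter $2t/n$, and track the factors of $1/2$ from the square roots against the $2k$ terms. The bookkeeping you describe ($k\cdot B(2t/n)$ and then the two upper bounds on $B$) is precisely what the paper does.
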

\begin{proof}
  \Cref{prop:multired,cor:redqty-domexpo} show that the centered moment generating
  function of $\empkl nkP$ at $t\in \R$ is dominated by
  \[
    \sqrt{\prod_{i=1}^{k}\exp\of{\frac{2t}{n}\cdot\of{\centered{Z_{i}}}}}
    \cdot
    \sqrt{\prod_{i=1}^{k}\exp\of{\frac{2t}{n}\cdot\of{\centered{Z'_{i}}}}}
  \]
  where $Z_{i}$ and $Z'_{i}$ are non-negative random variables stochastically
  dominated by the exponential distribution, so the result follows from
  \cref{prop:nonneg-cgf-expo-dom}.
\end{proof}

\begin{corollary}[\cref{cor:tail} restated]
  For all $\eps\geq 0$ we have that
  \begin{multline*}
    \PR{\empkl nkP \geq \E{\empkl nkP} + \eps}
    \leq \of{1 + \frac{n\eps}{4k}}^{2k}\cdot \exp\of{-\frac{n\eps}2}
    \\
    \leq \exp\of{-\frac{3n^2\eps^2}{48k+8n\eps}}
    \leq \exp\of{-\min\set{\frac{n^2\eps^2}{24k},\frac{n\eps}8}}
  \end{multline*}
  and for all $0\leq\eps \leq 2k/n$ we have that
  \[
    \PR{\empkl nkP \leq \E{\empkl nkP}-\eps}
    \leq \exp\of{-k\of{1-\sqrt{1-\frac{n\eps}{2k}}}^2}
    \leq \exp\of{-\frac{n^2\eps^2}{16k}}
    \,.
  \]
  In particular, \cref{conj:tb} holds with $c_1=2$ and $c_{2}=1/48$.
\end{corollary}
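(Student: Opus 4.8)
The statement will follow from \cref{thm:mgf} by the usual Chernoff argument, exploiting the two upper bounds it provides on the centered cumulant generating function: the ``gamma'' form $2k\log\of{\exp(-2t/n)/(1-2t/n)}$ for the right tail and the ``variance'' form $\frac{4kt^2/n^2}{1-2t/n}$ for the left tail. The one point requiring a little thought is that the gamma form is genuinely lossy on the left, so the two tails should be handled with different forms.

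For the right tail, for any $0<t<n/2$ Markov's inequality together with \cref{thm:mgf} gives
\[
  \PR{\empkl nkP\geq\E{\empkl nkP}+\eps}
  \leq \exp\of{-t\eps}\E{\exp\of{t\of{\centered{\empkl nkP}}}}
  \leq \exp\of{-t\eps+2k\log\frac{\exp(-2t/n)}{1-2t/n}}\,,
\]
and I would optimize over $t$. The stationarity condition works out to $1-2t/n=\frac{4k}{n\eps+4k}$, i.e.\ $t=\frac{n^2\eps}{2(n\eps+4k)}\in(0,n/2)$, and substituting this back the exponent collapses exactly to $-\frac{n\eps}2+2k\log\of{1+\frac{n\eps}{4k}}$, which is the first claimed bound. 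The passage to $\exp\of{-\frac{3n^2\eps^2}{48k+8n\eps}}$ is then the elementary inequality $u-\log(1+u)\geq\frac{3u^2}{6+4u}$ for $u\geq 0$ applied with $u=n\eps/(4k)$ (verified by matching values and derivatives at $0$), and the final $\min$-form follows by splitting into the cases $n\eps\leq 3k$ and $n\eps\geq 3k$ and bounding $48k+8n\eps$ crudely in each.

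For the left tail I would instead use the variance form: for $t<0$,
\[
  \PR{\empkl nkP\leq\E{\empkl nkP}-\eps}
  \leq \exp\of{t\eps}\E{\exp\of{t\of{\centered{\empkl nkP}}}}
  \leq \exp\of{t\eps+\frac{4kt^2/n^2}{1-2t/n}}\,.
\]
Writing $t=-n\tau$ with $\tau>0$ and setting $w=1+2\tau$, the identity $\frac{\tau(1+\tau)}{(1+2\tau)^2}=\frac14(1-w^{-2})$ turns the stationarity condition into $(1+2\tau)^2=(1-n\eps/(2k))^{-1}$, which has a solution precisely when $\eps<2k/n$; plugging it back in and factoring out $\of{1-\sqrt{1-n\eps/(2k)}}^2$, the exponent simplifies to exactly $-k\of{1-\sqrt{1-\frac{n\eps}{2k}}}^2$. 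The weaker bound $\exp\of{-\frac{n^2\eps^2}{16k}}$ then follows from $1-\sqrt{1-x}\geq x/2$ on $[0,1]$. (For $\eps>2k/n$ the variance bound grows only linearly in $\abs t$, with slope $2k/n$, as $t\to-\infty$, so the Chernoff bound tends to $0$ and the probability vanishes — consistent with $\E{\empkl nkP}\leq(k-1)/n<2k/n$.)

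Finally, \cref{conj:tb} follows from a union bound over the two tails. Each of the two tail probabilities is at most $\exp\of{-\frac1{24}\min\set{n^2\eps^2/k,\,n\eps}}$ — for the left tail using that $n\eps\leq 2k$ on its range of validity and that it vanishes otherwise — and since $2(k-1)\geq k$ for $k\geq 2$ (the case $k=1$ being trivial) this is in turn at most $\exp\of{-\frac1{48}\min\set{n^2\eps^2/(k-1),\,n\eps}}$; summing the two estimates gives the bound with $c_1=2$ and $c_2=1/48$. The main obstacle is not any individual step but keeping the two Chernoff optimizations organized so that the exponents really do collapse to the clean closed forms stated, and recognizing at the outset that the left tail must be handled via the variance bound rather than the gamma bound.
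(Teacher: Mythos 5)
Your proposal is correct and follows essentially the same route as the paper: a Chernoff bound using the gamma-form bound from \cref{thm:mgf} for the upper tail and the variance-form bound for the lower tail, followed by elementary relaxations (your inequality $u-\log(1+u)\geq\frac{3u^2}{6+4u}$ is exactly the paper's cited bound $\log(1+x)\leq\frac{x}{2}\cdot\frac{x+6}{2x+3}$ of Topsøe) and the observation $k\leq 2(k-1)$ for $k\geq 2$. The only cosmetic caveat is that ``matching values and derivatives at $0$'' does not by itself prove that inequality on all of $[0,\infty)$, but it is easily verified (the derivative comparison reduces to $16u^2\geq 12u^2$), so there is no genuine gap.
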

\begin{proof}
  The first inequality in each chain is immediate from \cref{thm:mgf} by
  computing the optimal Chernoff bound from (i.e.~the convex conjugate of)
  $2k \log\of{\frac{\exp\of{-2t/n}}{1-2t/n}}$ for the upper tail and
  $\frac{4kt^2/n^2}{1-2t/n}$ for the lower tail. The relaxed bounds
  follow from the elementary inequalities $\log\of{1+x}\leq\frac x2
  \cdot \frac{x+6}{2x+3}$ (e.g.~\cite{topsoe_bounds_2007}) for $x\geq 0$
  and $1 - \sqrt{1-x} \geq x/2$ for $x\leq 1$. The implication for
  \cref{conj:tb} is because $k\geq 2$ implies $k \leq 2(k-1)$.
\end{proof}

\begin{corollary}[\cref{cor:moments} restated]
  We have that $\Var\of{\empkl nkP}\leq 8k/n^2$, and more generally, for
  all integers $m\geq 1$ we have
  \[
    \E{\of{\centered{\empkl nkP}}^{2m}}\leq
    \frac{2^{6m}\of{k^mm! + (2m)!}}{n^{2m}}
    \,,
  \]
  so that in particular for all real $q\geq 1$ we have that
  \[
    \sqrt[q]{\E{\abs{\centered{\empkl nkP}}^{q}}}\leq
    \frac{24}{n}\of{\sqrt{kq} + q}\,.
  \]
\end{corollary}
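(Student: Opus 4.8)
The plan is to deduce all three moment estimates from the subgamma moment generating function bound of \cref{thm:mgf}, using standard facts about subgamma random variables rather than any new probabilistic input; the only real work is bookkeeping the explicit constants.

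First I would handle the variance. From \cref{thm:mgf} the function $\psi(t) = \log\E{\exp(t(\centered{\empkl nkP}))}$ satisfies $\psi(t) \leq \frac{4kt^2/n^2}{1-2t/n}$ for $0 \leq t < n/2$, and by \cref{rmrk:negative-mgf} the same bound (indeed a stronger one) holds for $t \leq 0$; hence $\psi(t) \leq 4kt^2/n^2 + o(t^2)$ as $t \to 0$. Since $\psi(0)=\psi'(0)=0$ and $\psi''(0)=\Var(\empkl nkP)$, a Taylor expansion of the upper bound around $0$ gives $\Var(\empkl nkP) \leq 2 \cdot 4k/n^2 = 8k/n^2$.

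Next I would get the even-moment bound. The cleanest route is to integrate the moment generating function: the substitution $t \mapsto n t /(2)$ in the second bound of \cref{thm:mgf} shows $\centered{\empkl nkP}$ has centered cumulant generating function bounded by that of $W$, a gamma random variable of shape $2k$ and rate $n/2$, on $(-\infty, n/2)$, and (again using \cref{rmrk:negative-mgf}) the bound holds on all of $(-\infty, n/2)$. A subgamma random variable with variance factor $\nu$ and scale parameter $b$ — here one can read off $\nu = 8k/n^2$ and $b = 2/n$ from the standard form $\frac{\nu t^2}{2(1-bt)}$ matching $\frac{4kt^2/n^2}{1-2t/n}$ — has its $2m$-th absolute central moment bounded, by e.g.\ \cite[\S2.4]{bou_lug_mas_concentration_2013} or direct expansion of $\E{\exp(t(\centered W))}$, by something of the form $C^m(m! \nu^m + (2m)! b^{2m})$. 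Plugging in $\nu = 8k/n^2$ and $b = 2/n$ and crudely bounding the universal constants yields $\E{(\centered{\empkl nkP})^{2m}} \leq 2^{6m}(k^m m! + (2m)!)/n^{2m}$; the factor $2^{6m}$ is deliberately wasteful so as to absorb all the $O(1)^m$ slack, so I would not optimize it.

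Finally, the $L^q$ bound follows by interpolation and monotonicity of $q \mapsto \sqrt[q]{\E{\abs{\centered{\empkl nkP}}^q}}$ (which is non-decreasing by Jensen/Lyapunov), so it suffices to treat $q = 2m$ an even integer and then pass to real $q$ by rounding up at the cost of a bounded factor. Taking $2m$-th roots in the previous display and using $\sqrt[2m]{a+b} \leq \sqrt[2m]{a} + \sqrt[2m]{b}$ together with $\sqrt[2m]{m!} \leq m$ and $\sqrt[2m]{(2m)!} \leq 2m$ (Stirling, or just $j! \leq j^j$) gives $\sqrt[2m]{\E{(\centered{\empkl nkP})^{2m}}} \leq \frac{8}{n}(\sqrt{k m} + 2m) \leq \frac{16}{n}(\sqrt{km} + m)$, and absorbing the loss from replacing a real $q$ by $2\lceil q/2\rceil$ into the constant yields $\frac{24}{n}(\sqrt{kq} + q)$.

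The one genuinely delicate point — as flagged — is keeping the constants honest: the standard subgamma moment inequality is usually quoted with unspecified absolute constants, so I would either invoke the explicit version in \cite[Theorem 2.3 and the surrounding discussion]{bou_lug_mas_concentration_2013} or, to be self-contained, expand $\E{\exp(t(\centered W))} = \exp\!\big(2k\log\tfrac{\exp(-2t/n)}{1-2t/n}\big)$ as a power series in $t$, read off $\E{(\centered W)^{2m}}$ exactly as a polynomial in $k$ with positive coefficients, bound each coefficient, and finally use $\E{(\centered{\empkl nkP})^{2m}} \leq \E{(\centered W)^{2m}}$ — which holds because domination of the (two-sided) cumulant generating function on a neighborhood of $0$ forces domination of all even moments. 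Everything else is routine.
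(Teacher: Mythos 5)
Your proposal is correct and follows essentially the same route as the paper: the variance via the second-order behavior at $t=0$ of the bound $\frac{4kt^2/n^2}{1-2t/n}$, the even central moments via the explicit sub-gamma moment inequality \cite[Theorem 2.3]{bou_lug_mas_concentration_2013} with variance factor $8k/n^2$ and scale $2/n$ (which is exactly where the stated $2^{6m}$ comes from), and the $L^q$ bound via Lyapunov monotonicity, rounding $q$ up to an even integer $2m\leq q+2\leq 3q$, and $j!\leq j^j$. One caveat on your proposed ``self-contained'' fallback: it is \emph{not} true that domination of the two-sided cumulant generating function on a neighborhood of $0$ (or even on a full interval) forces domination of all even moments --- once the second moments differ strictly, the order-$t^2$ gap dominates as $t\to 0$ and nothing can be inferred about fourth and higher moments. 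If you want to avoid citing the black-box theorem, the correct self-contained route is the one its proof uses: pass from the cgf bound to a Chernoff tail bound and integrate that tail against $2m\,x^{2m-1}\intd x$.
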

\begin{proof}
  The variance bound follows from the fact that
  $\Var\of Z = \lim_{t\to 0}\frac{ \log\E{\exp\of{t\of{\centered Z}}} }{t^2/2}$
  for a random variable $Z$ with moment generating function finite around $0$, and
  the general claim for integer $m$ follows from standard results on sub-gamma
  random variables, e.g.~\cite[Theorem 2.3]{bou_lug_mas_concentration_2013}
  applied to the bound from \cref{thm:mgf}.

  The in particular claim follows because
  $ \sqrt[q]{\E{\abs{\centered{\empkl nkP}}^{q}}}$ is a non-decreasing function of
  $q$ by Jensen's inequality, so we have that if $2m\leq q + 2$ is the
  smallest even integer at least $q$, then
  \begin{multline*}
    \sqrt[q]{\E{\abs{\centered{\empkl nkP}}^{q}}}
    \leq
    \frac{8}{n}\sqrt[2m]{k^{m}m! + (2m)!}\\
    \leq\frac{8}{n}\of{\sqrt[2m]{k^mm^{m}}+\sqrt[2m]{(2m)^{2m}}}
    \leq \frac{24}{n}\of{\sqrt{kq} + q}
  \end{multline*}
  where the last line is because $q\geq 1$ and $2m\leq q + 2$ implies $2m\leq 3q$.
\end{proof}

\printbibliography
\end{document}